\documentclass[reqno,centertags, 12pt]{amsart}

\usepackage{amssymb,amsmath,amsfonts,amssymb} %,numbysec}
\textheight 21cm \topmargin -0cm \leftmargin 0cm \marginparwidth 0mm
\textwidth 17cm \hsize \textwidth \advance \hsize by
-\marginparwidth \oddsidemargin -9mm \evensidemargin \oddsidemargin
\usepackage{latexsym}
\advance\hoffset by 5mm

%%%%%%%%%%%%%%%%
%%% COMMANDS %%%
%%%%%%%%%%%%%%%%
\def\@abssec#1{\vspace{.05in}\footnotesize \parindent .2in
{\bf #1. }\ignorespaces}
%proof
%\def\proof{\par{\it Proof}. \ignorespaces}
%\def\endproof{{\ \vbox{\hrule\hbox{%
%   \vrule height1.3ex\hskip0.8ex\vrule}\hrule
%  }}\par}
\newtheorem{theorem}{Theorem}[section]

\newtheorem{lemma}[theorem]{Lemma}
\newtheorem{proposition}[theorem]{Proposition}

\newtheorem{definition}[theorem]{Definition}

\def \Rm {\mathbb R}
\def \Tm {\mathbb T}

\def\cP{\mathcal P}

\allowdisplaybreaks \numberwithin{equation}{section}

\title{Nonlocal maximum principles for active scalars}
\author{Alexander Kiselev}
\thanks{Department of
Mathematics, University of Wisconsin, Madison, WI 53706, USA;
email: kiselev@math.wisc.edu}

%%%%%%%%%%%%%%%
%%% AUTHORS %%%
%%%%%%%%%%%%%%%
%\numberbysection
\begin{document}

%{\Large \bf PRELIMINARY DRAFT}

\begin{abstract}
Active scalars appear in many problems of fluid dynamics. The most common examples
of active scalar equations are 2D Euler, Burgers, and 2D surface quasi-geostrophic
equations. Many questions about regularity and properties of solutions of these
equations remain open. We develop the idea of nonlocal maximum principle introduced in \cite{KNV},
formulating a more general criterion and providing new applications. The most interesting
application is finite time regularization of weak solutions in the supercritical regime.
\end{abstract}

\maketitle

\section{Introduction}\label{intro}

We will call the function $\theta(x,t)$ (dissipative) active scalar if it satisfies
\begin{equation}\label{as1}
\theta_t = u \cdot \nabla \theta + (-\Delta)^{\alpha}\theta,  \,\,\,\theta(x,0)=\theta_0(x),
\end{equation}
where $0 \leq \alpha \leq 1,$ and the vector field $u$ is determined  by $\theta.$
In the case $\alpha=0$ we will usually mean that the equation is conservative, so that dissipation term vanishes
(instead of being equal to $\theta$).
In this paper, we will consider \eqref{as1} on a torus $\Tm^d$ (equivalently, we can talk about periodic solutions in $\Rm^d$). % or in the whole space $\Rm^d.$
Active scalars appear in many problems coming from fluid mechanics. The classical examples are
the 2D Euler equation in the vorticity form where $u = \nabla^\perp (-\Delta)^{-1}\theta$ and the Burgers equation
where $u = \theta.$ Another important example is the 2D surface quasi-geostrophic (SQG) equation coming from
atmospheric science, for which we have $u = \nabla^\perp (-\Delta)^{-1/2}\theta.$ More generally, the
scale of active scalars that interpolate between the SQG and 2D Euler is called modified SQG, with $u =
\nabla^\perp (-\Delta)^{-\gamma}\theta,$ $1/2 < \gamma < 1.$ Another example of equation that is closely related
is the porous media equation with fractional dissipation (see e.g. \cite{CCGO}).
See \cite{Const1} for an overview of active scalars and more
background information.

%In the case of the 2D SQG equation, the velocity is given by $u = \nabla^\perp (-\Delta)^{-1/2}\theta.$
The SQG equation
appeared in the mathematical literature for the first time in \cite{CMT1}, and since then has attracted significant attention.
The equation has $L^\infty$ maximum principle \cite{R,CC1}, which makes the $\alpha =1/2$ dissipation critical.
It has been known since \cite{R} that the equation has global smooth solutions (for appropriate initial data) when $\alpha >1/2.$
The global regularity in the critical case has been settled independently in \cite{KNV} (in the periodic setting) and \cite{CV} (in the whole
space as well as in local setting). A third proof of the same
result was provided recently in \cite{KN1}. All proofs are quite different. The method of \cite{CV} is inspired by Di-Giorgi iterative
estimates. The approach of \cite{KN1} uses appropriate set of test functions and estimates on their evolution.
The method of \cite{KNV}, on the other hand, is based on a new technique which can be called a nonlocal
maximum principle. The idea is to prove that the evolution according to \eqref{as1} preserves a certain modulus of continuity $\omega$ of
solution. The control is strong enough to give uniform bound of $\|\nabla \theta\|_{L^\infty}$ in the critical case, which is
sufficient for global regularity.

The method of \cite{KNV} has been applied to many other problems: Burgers equation \cite{KNS,DDL}, modified SQG equation \cite{MX},
whole space SQG \cite{AH,DD}, porous media equation \cite{CCGO}, and other models \cite{Dong1a,LR1}. %Most applications, however, used the method in the same way as
%\cite{KNV}.
Our goal here is to generalize this method, making it applicable to wider class of questions.
In particular, we will allow for time dependence in the modulus of continuity. We will also allow for discontinuous "moduli of continuity",
which will turn out quite useful for the questions of regularization of weak solutions. We postpone the statement of the general criteria
to the Section~\ref{gencrit} since it is fairly technical and requires lengthy notation to state.

The main application that we consider here is the eventual regularization of the solutions to supercritcal Burgers, SQG and modified SQG equation.
For the supercritical ($\alpha<1/2$) Burgers equation, it is known that the solutions can form shocks \cite{KNS}.
It is not known whether solutions of the supercritical SQG and modified SQG equations can form singularities in finite time.
%Supercritical case remains largely open for SQG and modified SQG equations. It is not known if the solutions can form
%singularity in finite time.
However, for the SQG equation, Silvestre proved \cite{Sil1} that for some $\alpha <1/2$ but sufficiently close to it, the weak solution becomes regular after a finite time
(a similar result for the Burgers equation was proved in \cite{Sil2}). The method of Silvestre is based on Caffarelli-Vasseur technique \cite{CV}.
Recently, Dabkowski \cite{Dabk} proved eventual regularization for arbitrary supercritical SQG ($1/2>\alpha>0$). The method of Dabkowski builds upon the
approach of \cite{KN1}, but employs a more flexible class of test functions and improves some key estimates. The approach uses incompressibility of $u$ in an
essential way, but is not tied to the specific structure of the SQG equation.
%However, the method does not seem to be easily extendable to Burgers equation.

Here we will prove eventual regularization in the full range of
supercritical $\alpha$ for Burgers, SQG and modified SQG equations. The method is different from either \cite{Sil1} or \cite{Dabk}, and is more closely
related to the original approach of \cite{KNV}. We now state the main results that we prove. To simplify regularity issues, it will be convenient for us to
work with dissipative regularization of \eqref{as1}:
\begin{equation}\label{asd1}
\theta_t = u \cdot \nabla \theta + (-\Delta)^{\alpha}\theta + \epsilon \Delta \theta,  \,\,\,\theta(x,0)=\theta_0(x).
\end{equation}
The solutions of \eqref{asd1} will be smooth for $t>0$ for every $\epsilon>0,$ for all equations we consider (given, say, $L^2$ initial data).
All estimates we prove will be proved uniformly in $\epsilon>0.$ Then, in the limit $\epsilon \rightarrow 0$ we can obtain a weak solution
via standard procedure (see e.g. \cite{CC1}). It is not hard to show that this weak solution will inherit the uniform estimates obeyed by approximations.
We first state our results for the Burgers equation. %For Theorems~\ref{mainbur} and \ref{secbur}, we set equation \eqref{asd1} on $\Tm^1$ and let $u=\theta.$
\begin{theorem}\label{mainbur}
Let $\theta(x,t)$ be solution of \eqref{asd1} set on $\Tm^1$ with $u=\theta,$
$\theta_0 \in L^\infty,$ $0<\alpha<1/2$ and $\epsilon>0.$ Fix $\beta,$ $1>\beta>1-2\alpha.$
Then there exists a time $T=T(\alpha,\beta,\|\theta_0\|_{L^\infty})$
such that $\|\theta(x,t)\|_{C^\beta}$ is uniformly bounded for all times $t \geq T,$ with bound independent of $\epsilon.$
\end{theorem}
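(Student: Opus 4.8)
The plan is to apply the time-dependent nonlocal maximum principle of Section~\ref{gencrit}: it suffices to exhibit a family of (possibly discontinuous) moduli of continuity $\omega(\cdot,t)$ such that (i) every function with sup norm $\le\|\theta_0\|_{L^\infty}$ obeys $\omega(\cdot,0)$, (ii) the family is preserved by the evolution \eqref{asd1}, and (iii) for $t\ge T$ the modulus $\omega(\cdot,t)$ is bounded by $C\xi^\beta$ on the relevant range of separations $\xi$. First I would record the $L^\infty$ maximum principle, $\|\theta(\cdot,t)\|_{L^\infty}\le\|\theta_0\|_{L^\infty}=:B$ for all $t\ge 0$, which holds uniformly in $\epsilon$ because both the fractional dissipation and $\epsilon\Delta$ are dissipative while $u=\theta$ enters as transport. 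In particular the velocity is bounded by $B$. Since $\theta_0\in L^\infty$ only, the datum obeys no continuous modulus; the correct starting object is a modulus with a jump, $\omega(0^+,0)=2B$, which is exactly why the discontinuous version of the criterion is needed.

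The heart of the matter is the construction of $\omega(\xi,t)$ together with the verification of a pointwise differential inequality at a breakthrough. At a first touching time there are $x\ne y$ with $\theta(x,t)-\theta(y,t)=\omega(\xi,t)$, $\xi=|x-y|$, and $\theta$ strictly below the modulus elsewhere; one needs
$$\partial_t[\theta(x,t)-\theta(y,t)]\le\partial_t\omega(\xi,t).$$
The left side splits into three pieces. The nonlinear term $\theta\theta_x$ contributes at most $\omega(\xi,t)\,\partial_\xi\omega(\xi,t)$, since at the touching configuration $\theta_x(x)=\theta_x(y)=\partial_\xi\omega(\xi,t)$ while $|\theta|\le B$. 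The fractional dissipation contributes a strictly negative quantity $-D[\omega](\xi,t)$, controlled from above by the standard one-dimensional integral bound: concavity of $\omega$ makes the near-diagonal part negative, and the jump at the origin supplies additional far-field dissipation. The viscous term gives $\epsilon(\theta_{xx}(x)-\theta_{xx}(y))\le 2\epsilon\,\partial_{\xi\xi}\omega(\xi,t)\le0$ whenever $\omega(\cdot,t)$ is concave, so it may simply be discarded; this is precisely what renders every estimate uniform in $\epsilon$. Thus it suffices to arrange
$$\partial_t\omega(\xi,t)\ge\omega(\xi,t)\,\partial_\xi\omega(\xi,t)-D[\omega](\xi,t).$$

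For the design of $\omega$ the governing heuristic is a scale comparison: at separation $\xi$ the nonlinear growth rate is of order $\omega\,\partial_\xi\omega\sim\omega^2/\xi$, while the dissipation rate is of order $\omega/\xi^{2\alpha}$, so dissipation dominates exactly when $\omega(\xi)\lesssim\xi^{1-2\alpha}$. For a power modulus $\omega=\kappa\xi^{\beta}$ this balance reads $\beta-2\alpha<2\beta-1$, i.e. $\beta>1-2\alpha$, which is the hypothesis of the theorem. I would therefore take a family interpolating between a steep profile carrying the jump $\omega(0^+,t)=2B(t)$ and a fixed Hölder profile $\kappa\xi^\beta$, with the jump height $B(t)$ driven to zero in finite time by the far-field dissipation and with a cutoff length $\rho(t)\to0$ below which the steep part lives. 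Once $t\ge T$ the steep part has disappeared and $\omega(\xi,t)\le C\xi^\beta$ for all $\xi$ up to order one; combined with the $L^\infty$ bound at large separations this yields $\|\theta(\cdot,t)\|_{C^\beta}\le C$ uniformly in $\epsilon$, completing the argument.

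The main obstacle is the construction in the last paragraph: one must choose the profile shape, the jump law $B(t)$, and the cutoff $\rho(t)$ simultaneously so that the differential inequality holds across all scales $\xi$ — in particular at the matching scale between the steep and Hölder regions and in the transient regime where the jump still contributes — while also guaranteeing that the jump closes within a time $T$ depending only on $\alpha$, $\beta$, and $B$. Making the far-field dissipation quantitatively defeat the supercritical nonlinear growth throughout this transient, uniformly down to $\xi\to0$, is the delicate point; the favorable signs of both the concave near-diagonal dissipation and the viscous term are what provide the necessary slack.
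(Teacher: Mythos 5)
Your framework is exactly the paper's: the time-dependent criterion of Theorem~\ref{mainthm} with a discontinuous modulus, $\Omega=\omega$ since $u=\theta$, the viscous term discarded by concavity, and the scaling balance $\omega(\xi)\lesssim\xi^{1-2\alpha}$ (equivalently $\beta>1-2\alpha$ for the power profile) — all correct, including the key observation that a jump $\omega(0+,t)>0$ produces extra dissipation of order $\omega(0+,t)\,\xi^{-2\alpha}$, which is what drives the regularization. But as written this is a program, not a proof: the entire content of the theorem lies in the construction you defer in your final paragraph, and you leave both the profile shape and the verification of \eqref{keyineq} across all scales unspecified. In particular you posit two independently evolving parameters, a jump law $B(t)$ and a cutoff $\rho(t)$, to be "chosen simultaneously" — precisely where an unguided attempt stalls, since with two free laws the matching at $\xi\sim\rho(t)$ and the transient estimates below the cutoff become genuinely coupled, and nothing in your sketch shows the inequality can be closed there.

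The paper collapses this to a one-parameter problem. Starting from the stationary preserved family $\omega(\xi)=H(\xi/\delta)^\beta$ capped at $H$ (Proposition~\ref{statmod}, requiring $H\le C_1\delta^{1-2\alpha}$; note that $\delta$ must then be taken large enough that $(1-\beta)H\ge 2\|\theta_0\|_{L^\infty}$, a compatibility condition between the jump size and the H\"older constant that your sketch does not record), one replaces $\omega$ on $(0,\xi_0)$ by its tangent line at $\xi_0$, giving the family \eqref{omxi0}. This slaves the jump height $(1-\beta)H\delta^{-\beta}\xi_0^\beta$ to the single cutoff $\xi_0$, automatically keeps every member concave, and reduces the whole construction to the scalar ODE $\xi_0'=-C_2\xi_0^{1-2\alpha}$ of \eqref{xi0eq}, which reaches zero in finite time $T\sim\delta^{2\alpha}/C_2$. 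The verification (Lemma~\ref{finlembur}) then splits the dissipation in half: on $0<\xi\le\xi_0$, one half of the jump term $-C\,\omega(0+)\xi^{-2\alpha}$ dominates the nonlinearity \eqref{eq123} (using $\beta+2\alpha>1$ and $C_1$ small), while the other half pays for the negative time derivative $\partial_t\omega$ \eqref{eq124} (which fixes how small $C_2$ must be); on $\xi_0<\xi\le\delta$ the stationary computation applies verbatim since $\partial_t\omega=0$ there. So your plan is sound and your heuristics match the paper's, but the tangent-line interpolation, the single ODE for $\xi_0(t)$, and the half-and-half dissipation bookkeeping — the steps that actually make the differential inequality close, and the heart of the theorem — are missing.
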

The following result can be derived from Theorem~\ref{mainbur}, given results of \cite{KNS} and techniques of \cite{CW} or \cite{CCW}.
\begin{theorem}\label{secbur}
Assume $0<\alpha<1/2$ and $\theta_0 \in H^{3/2-2\alpha}.$ Let $\theta(x,t)$ be weak solution of the one-dimensional Burgers equation
\eqref{asd1} in the periodic setting obtained in the limit $\epsilon \rightarrow 0$. Then there exist $0<T_1(\alpha,\theta_0)\leq T_2(\alpha,\theta_0)<\infty$ such that
$\theta(x,t)$ is smooth for $0<t <T_1$ and $t>T_2.$
\end{theorem}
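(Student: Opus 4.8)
The plan is to treat the two time regimes separately, because the early-time and late-time smoothness come from entirely different mechanisms: a local well-posedness theory in the scaling-critical space near $t=0$, and the eventual regularization of Theorem~\ref{mainbur} combined with a conditional regularity (bootstrap) argument for large $t$. The possible gap $(T_1,T_2)$ is exactly where \cite{KNS} permits shock formation, so no regularity is claimed there.

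First I would handle $0<t<T_1$. The exponent $3/2-2\alpha$ is precisely the scaling-critical Sobolev exponent for fractional Burgers: under the scaling $\theta_\lambda(x,t)=\lambda^{2\alpha-1}\theta(\lambda x,\lambda^{2\alpha}t)$ that leaves the limiting equation ($\epsilon=0$) invariant, the homogeneous $\dot H^{3/2-2\alpha}$ norm is preserved. By the local well-posedness theory of \cite{KNS}, the datum $\theta_0\in H^{3/2-2\alpha}$ therefore generates a unique strong solution on some maximal interval $[0,T_1)$, and the smoothing of the dissipative term upgrades this immediately to a $C^\infty$ solution for $0<t<T_1$. It then remains to identify this strong solution with the weak solution obtained as the $\epsilon\to 0$ limit; for this I would use a weak–strong uniqueness argument, so that the vanishing-viscosity solution, which inherits the energy inequality, must coincide with the strong solution on $[0,T_1)$ and hence is smooth there.

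For large times I would apply Theorem~\ref{mainbur} directly. Fixing any $\beta$ with $1-2\alpha<\beta<1$, that theorem furnishes a time $T=T(\alpha,\beta,\|\theta_0\|_{L^\infty})$ and an $\epsilon$-uniform bound on $\|\theta(\cdot,t)\|_{C^\beta}$ for $t\ge T$; passing to the limit, the weak solution satisfies $\theta(\cdot,t)\in C^\beta$ for all $t\ge T=:T_2$. The key point is that $1-2\alpha$ is the scaling-critical Hölder exponent (the seminorm $[\theta_\lambda]_{C^{1-2\alpha}}$ is scale invariant), so the choice $\beta>1-2\alpha$ places the solution strictly above critical regularity. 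In this subcritical-Hölder regime the conditional regularity techniques of \cite{CW} (or \cite{CCW}) apply: a solution of the drift–diffusion problem with drift $u=\theta\in C^\beta$, $\beta>1-2\alpha$, is automatically $C^\infty$. This gives smoothness of the weak solution for all $t>T_2$, and by construction $T_1\le T_2$.

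The main obstacle is not the bootstrap itself, which is essentially the content of \cite{CW}/\cite{CCW}, but the two identification steps that connect the abstract estimates to this particular weak solution. Showing that the $\epsilon\to 0$ limit agrees with the local strong solution requires a weak–strong uniqueness statement that is uniform in the viscosity, and transferring the $\epsilon$-uniform $C^\beta$ bound of Theorem~\ref{mainbur} to the limit requires lower semicontinuity of the Hölder seminorm under the relevant (weak-$*$ or a.e.) convergence. Both are technical but standard, and once they are in place the conditional regularity result closes the argument.
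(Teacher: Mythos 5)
Your proposal follows essentially the same route as the paper: smoothness for $0<t<T_1$ via the local theory of \cite{KNS} for $H^{3/2-2\alpha}$ data, and smoothness for $t>T_2$ by combining the uniform $C^\beta$ bound of Theorem~\ref{mainbur} ($\beta>1-2\alpha$) with a Constantin--Wu type conditional regularity result and standard approximation arguments. The only cosmetic difference is ordering: the paper states the bootstrap (Proposition~\ref{burreg56}) as an $\epsilon$-uniform $H^s$ estimate for the smooth regularized solutions and then passes to the limit, whereas you first transfer the $C^\beta$ bound to the limit and apply conditional regularity to the weak solution directly --- both are viable, with the paper's ordering avoiding the need to invoke a weak-solution version of the bootstrap.
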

We will prove Theorems~\ref{mainbur} and \ref{secbur} in Section~\ref{burg}.

The next two Theorems are the analogs of Theorems~\ref{mainbur} and \ref{secbur} for the SQG and modified SQG equations.
%For these Theorems, we consider \eqref{asd1} on $\Tm^2$ and set $u= \nabla^\perp (-\Delta)^{-\gamma} \theta.$
\begin{theorem}\label{mainsqg}
Let $\theta(x,t)$ be solution of \eqref{asd1} set on $\Tm^2,$ with $u= \nabla^\perp (-\Delta)^{-\gamma} \theta,$
$\theta_0 \in L^\infty,$ $1/2 \leq \gamma<1,$ $\gamma+\alpha<1,$ $\alpha>0,$ and $\epsilon>0.$ Fix $\beta,$ $2-2\gamma>\beta>2-2\gamma-2\alpha.$
Then there exists a time $T=T(\alpha,\gamma,\beta,\|\theta_0\|_{L^\infty})$
such that $\|\theta(x,t)\|_{C^\beta}$ is uniformly bounded for all times $t \geq T,$ with bound independent of $\epsilon.$
\end{theorem}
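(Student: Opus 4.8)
The plan is to run the generalized nonlocal maximum principle of Section~\ref{gencrit}, with a time-dependent and initially discontinuous modulus of continuity $\omega(\xi,t)$, following the same scheme as Theorem~\ref{mainbur} but with the one-dimensional transport and dissipation estimates replaced by their $\Tm^2$ analogues. Because $\theta_0$ is only bounded, the sole modulus it obeys at $t=0$ is the trivial flat one $\omega(\xi,0)\equiv 2\|\theta_0\|_{L^\infty}$, which jumps at $\xi=0$ and carries no regularity — this is precisely why discontinuous moduli are needed. I would then build a family $\omega(\xi,t)$ whose profile equals the H\"older modulus $C\xi^\beta$ for $\xi\le L(t)$, rises from $C L(t)^\beta$ to the saturation level $2\|\theta_0\|_{L^\infty}$ on a transition zone $L(t)\le\xi\le\xi_*$ (with $C\xi_*^\beta\simeq 2\|\theta_0\|_{L^\infty}$), and is flat above $\xi_*$. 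The front $L(t)$ increases from $0$ and reaches $\xi_*$ at the desired time $T$; for $t\ge T$ the modulus has become the stationary profile $\omega_*(\xi)=\min(C\xi^\beta,2\|\theta_0\|_{L^\infty})$, which yields the claimed uniform $C^\beta$ bound. Since at a fixed scale the modulus only decreases as $L(t)$ sweeps past it, one has $\partial_t\omega\le 0$ throughout.

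The crux is to verify preservation: at any incipient breakthrough pair $x\ne y$, $\xi=|x-y|$, with $\theta(x)-\theta(y)=\omega(\xi,t)$, one needs
\begin{equation}
\Omega_u(\xi,t)+\Omega_\alpha(\xi,t)\le \partial_t\omega(\xi,t),
\end{equation}
where $\Omega_\alpha<0$ is the contribution of the $(-\Delta)^\alpha$ dissipation and $\Omega_u$ bounds the transport term; the extra $\epsilon\Delta\theta$ contributes a further nonpositive term, so all estimates may be carried out with the fractional dissipation alone and are automatically uniform in $\epsilon$. Here I would use the concavity bound $\Omega_\alpha\lesssim -\xi^{-2\alpha}\,\omega(\xi,t)$ (up to the sharp second-difference correction), together with the two-dimensional velocity estimate: since $u=\nabla^\perp(-\Delta)^{-\gamma}\theta$ smooths by $2\gamma-1$ derivatives, if $\theta$ obeys $\omega$ then the velocity difference at scale $\xi$ obeys a modulus $\Omega(\xi,t)$ obtained by integrating $\omega$ against the corresponding kernel, of order $\xi^{\beta+2\gamma-1}$ in the H\"older regime. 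At a breakthrough point the gradients of $\theta$ at $x$ and $y$ align with $e=(x-y)/|x-y|$ and have length $\partial_\xi\omega(\xi,t)$, so the transport term collapses to $(u(x)-u(y))\cdot e\,\partial_\xi\omega$ and is bounded by $\Omega_u\lesssim \Omega(\xi,t)\,\partial_\xi\omega(\xi,t)$; it is this cancellation (rather than a crude $\|u\|_{L^\infty}\partial_\xi\omega$ bound) that makes the supercritical case tractable.

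Comparing the resulting powers in the H\"older regime, transport scales like $\xi^{2\beta+2\gamma-2}$ while dissipation scales like $-\xi^{\beta-2\alpha}$; dissipation dominates as $\xi\to0$ exactly when $\beta-2\alpha<2\beta+2\gamma-2$, i.e. $\beta>2-2\gamma-2\alpha$, and the condition $\beta<2-2\gamma$ keeps the velocity H\"older exponent $\beta+2\gamma-1$ below $1$ so that the velocity estimate closes (the degenerate Burgers analogue is $\gamma=\tfrac12$, which recovers the range $\beta>1-2\alpha$ of Theorem~\ref{mainbur}). Consequently on $\xi\le L(t)$ the modulus is purely H\"older with $\partial_t\omega=0$ and the inequality holds with strict room; on the flat part $\partial_\xi\omega\simeq0$ kills transport and the mild negative dissipation suffices. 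I expect the genuine obstacle to be the moving transition zone $L(t)\le\xi\le\xi_*$, where one must show that the strictly negative excess of dissipation over transport is at least as large as the shrinking rate $|\partial_t\omega|$ of the front; because both $\Omega_u$ and $\Omega_\alpha$ are nonlocal, the flat far field and the H\"older near field both feed into these quantities at scale $\xi\sim L(t)$, and the transition profile together with the front law $L(t)$ must be tuned so that this holds uniformly. Once the family is shown to be preserved, the trivially satisfied initial condition propagates for all time, and evaluating at $t\ge T$ gives the theorem.
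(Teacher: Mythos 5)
Your general framework is the right one (time-dependent moduli fed into Theorem~\ref{mainthm}, the same terminal profile $\min(C\xi^\beta,2\|\theta_0\|_{L^\infty})$, the same exponent arithmetic $\beta>2-2\gamma-2\alpha$), but your regularizing front moves in the wrong direction, and this is fatal. You take the H\"older region to be $[0,L(t)]$ with $L(t)$ growing from $0$, so your modulus is \emph{continuous at zero for every $t>0$}: this asserts a uniform-in-$\epsilon$ $C^\beta$ bound at the smallest scales instantly, from merely bounded data, and the criterion cannot certify it. Quantitatively: for any concave modulus with $\omega(0+,t)\ge 0$ one has $|D_\alpha(\xi,t)|\le C\omega(\xi,t)\xi^{-2\alpha}$, so \eqref{keyineq} forces $\omega(\xi,t)\ge \omega(\xi,0)e^{-Ct\xi^{-2\alpha}}$; hence at time $t$ all scales $\xi$ with $\xi^{2\alpha}\gg t$ still carry oscillation $\approx 2\|\theta_0\|_{L^\infty}$. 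Feeding this unavoidable $O(\|\theta_0\|_{L^\infty})$ far field at scales just above $L(t)$ into the velocity bound \eqref{modconumsqg} gives a transport term at $\xi\sim L(t)$ of size at least $c\,\|\theta_0\|_{L^\infty}L^{2\gamma-2}\cdot \beta C L^{\beta}$ (up to logarithmic gains), whereas the dissipation available there — your modulus has \emph{no jump} at zero, so only the curvature term survives — is at most $C'CL^{\beta-2\alpha}$; the required inequality reduces to $\|\theta_0\|_{L^\infty}\lesssim L^{2-2\gamma-2\alpha}$, which fails as $L\to 0$, i.e.\ exactly at the early times your scheme must get through. This also exposes an internal inconsistency in your write-up: you correctly say discontinuous moduli are needed, but you use the discontinuity only at $t=0$. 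The paper's construction \eqref{omxi0} keeps a jump $\omega(0+,t)=(1-\beta)H(\xi_0(t)/\delta)^\beta>0$ for all $t<T$: below the front the modulus is a nearly flat tangent line (small slope, so transport is harmless there), the front $\xi_0(t)$ \emph{decreases} from $\delta$ to $0$ via $\xi_0'=-C_2\xi_0^{1-2\alpha}$, and the jump powers the enhanced dissipation $D_\alpha\le -C\omega(0+)\xi^{-2\alpha}$ of \eqref{dissip56}, half of which absorbs the nonlinearity and half the front motion $\partial_t\omega$. Regularity propagates from large scales down, not from small scales up; before $T$ no continuity at all is claimed below the front, only a shrinking cap on the amplitude of small-scale jumps.

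The second gap is the endpoint $\gamma=1/2$, which your theorem statement includes but your argument does not reach. You treat it as the ``degenerate Burgers analogue'' of the kernel-integration estimate, but for $\gamma=1/2$ the velocity is a Riesz transform of $\theta$: the first integral in \eqref{modconumsqg} diverges for any modulus not vanishing at zero (and blows up logarithmically as your near-jump profiles sharpen), and $u$ is only BMO — there is no admissible $\Omega(\xi)$ depending on $\omega$ alone. The paper needs an entirely separate mechanism here (Section~\ref{sqg}): an improved dissipation estimate isolating the ``perpendicular'' diffusion $D^\perp_\alpha$ (Lemma~\ref{dissimpest}), and a breakthrough-scenario-specific bound on $(u(x)-u(y))\cdot l$ (Lemma~\ref{nonlinsqg234}) in which the uncontrollable near-singularity part of the Riesz kernel, integrated over small squares around $x$ and $y$, is dominated by $-\xi^{2\alpha}D^\perp_\alpha(\xi)$ — this is also why Theorem~\ref{mainthm} bounds only the projection $(u(x+\xi e)-u(x))\cdot e$ rather than the modulus of continuity of $u$, a structural point you notice but never exploit. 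Without this, your proof at best covers $1/2<\gamma<1$, and even there only after the front direction and transition-zone dynamics are repaired as above.
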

\begin{theorem}\label{secsqg}
Assume $1/2 \leq \gamma <1,$ $\alpha+\gamma<1,$ $\alpha>0,$ and $\theta_0 \in H^{1+2\gamma-2\alpha}.$ Let $\theta(x,t)$ be weak solution of the (modified) SQG equation
\eqref{asd1} (set on $\Tm^2$ with $u= \nabla^\perp (-\Delta)^{-\gamma} \theta$) obtained in the limit $\epsilon \rightarrow 0$. Then there exist $0<T_1(\alpha,\gamma,\theta_0)\leq T_2(\alpha,\gamma,\theta_0)<\infty$ such that
$\theta(x,t)$ is smooth for $0<t <T_1$ and $t>T_2.$
\end{theorem}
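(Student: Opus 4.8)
The plan is to prove Theorem~\ref{secsqg} by splicing together two independent pieces of information about the vanishing-viscosity weak solution: short-time smoothness coming from local well-posedness of the (modified) SQG equation, and eventual smoothness coming from the uniform $C^\beta$ bound supplied by Theorem~\ref{mainsqg} followed by a bootstrap. Since nothing is asserted on the intermediate window $[T_1,T_2]$ (where a singularity is not excluded), the two endpoints may be treated in isolation, exactly as in the passage from Theorem~\ref{mainbur} to Theorem~\ref{secbur}.

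For the short-time statement I would invoke local well-posedness of \eqref{asd1} at the regularity $H^{1+2\gamma-2\alpha}$. This index is at least the scaling-critical one for the equation with $u=\nabla^\perp(-\Delta)^{-\gamma}\theta$, so the standard commutator/energy scheme — run uniformly in $\epsilon$ via the techniques of \cite{CW},\cite{CCW} — yields a unique solution on some interval $(0,T_1)$ with $T_1=T_1(\alpha,\gamma,\theta_0)>0$, and the $(-\Delta)^\alpha$-dissipation immediately upgrades it to $C^\infty$ for $0<t<T_1$. Because these bounds are uniform in $\epsilon$, the $\epsilon\to0$ weak solution coincides with this smooth solution on $(0,T_1)$; this is precisely where the hypothesis $\theta_0\in H^{1+2\gamma-2\alpha}$, rather than merely $L^\infty$, is used.

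For the eventual statement, note first that $1+2\gamma-2\alpha>1$ (since $\alpha<1-\gamma\le\gamma$ forces $\gamma>\alpha$), so in two dimensions $H^{1+2\gamma-2\alpha}\hookrightarrow L^\infty$ and Theorem~\ref{mainsqg} applies. It provides, for any fixed $\beta$ with $2-2\gamma>\beta>2-2\gamma-2\alpha$, a finite time $T_2=T(\alpha,\gamma,\beta,\|\theta_0\|_{L^\infty})$ and an $\epsilon$-uniform bound on $\|\theta(\cdot,t)\|_{C^\beta}$ for all $t\ge T_2$. The decisive point is that such a $\beta$ renders the drift subcritical: since $\theta\mapsto u$ is a smoothing operator of order $1-2\gamma\le0$, the bound $\theta\in C^\beta$ gives $u\in C^{\beta+2\gamma-1}$ with $\beta+2\gamma-1>1-2\alpha$. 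A divergence-free drift of Hölder class strictly above $1-2\alpha$ places \eqref{asd1} in the regime covered by the regularity theory for linear drift-diffusion equations (the De Giorgi and Schauder estimates of \cite{Sil1},\cite{CV},\cite{CW}): bounded solutions first gain a fixed amount of Hölder regularity, after which one feeds the improved regularity of $\theta$ back into $u$ and iterates, reaching $C^\infty$ on $t>T_2$. Performing this uniformly in $\epsilon$ and passing to the limit yields smoothness of the weak solution for $t>T_2$; finally, replacing $T_1$ by $\min(T_1,T_2)$ enforces $T_1\le T_2$.

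The hard part will be the bootstrap from the borderline Hölder bound to full smoothness. Because $\beta<2-2\gamma\le1$, the gradient $\nabla\theta$ is not yet a function, so one cannot simply differentiate the equation; the initial gain of regularity must come from the nondivergence/De Giorgi theory for \eqref{asd1}, and only once $\theta$ is regular enough that $\nabla\theta$ is genuinely Hölder does the self-improving Schauder iteration — driven by the order-$(1-2\gamma)$ smoothing in $\theta\mapsto u$ — close up. The quantitative point requiring care is that the strict inequality $\beta>2-2\gamma-2\alpha$ from Theorem~\ref{mainsqg} furnishes exactly the surplus $\beta+2\gamma-1-(1-2\alpha)>0$ needed to launch the iteration, and that every estimate be kept uniform in $\epsilon$ so as to survive the vanishing-viscosity limit.
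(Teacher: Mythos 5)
Your proposal follows essentially the same route as the paper: Theorem~\ref{secsqg} is deduced from Theorem~\ref{mainsqg} together with local existence of a solution smooth for $t>0$ at the critical regularity $H^{1+2\gamma-2\alpha}$ (proved as in \cite{KNS}) and the conditional regularity statement, Proposition~\ref{conregmsqg}, which upgrades the $\epsilon$-uniform $C^\beta$ bound with $\beta>2-2\gamma-2\alpha$ to full smoothness for $t>T_2$ before passing to the vanishing-viscosity limit. Your De Giorgi/Schauder phrasing of the bootstrap is simply one admissible instantiation of the step the paper delegates to the ideas of \cite{CW} and \cite{CCW}.
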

 In Theorems~\ref{mainsqg} and \ref{secsqg}, the case $\gamma=1/2$ corresponds to the SQG equation, while $\gamma >1/2$ to the modified SQG.
 The SQG case is more complicated for our method than modified SQG case, for the reasons that will be explained in more detail in Section~\ref{sqg}.
 Similarly to the Burgers case, Theorem~\ref{secsqg} follows from Theorem~\ref{mainsqg} given ideas of \cite{CCW,CW,KNS}. This will be explained in more
detail in Section~\ref{modsqg}.

We would like to stress that, similarly to \cite{Sil1,Sil2,Dabk}, the regularization mechanism (or, at least, the mechanism of the proof) is different
from typical long time regularity results due to certain norm of solution becoming small. Rather, the picture is that of regularization cascade spreading from
large to small scales. The proof proceeds by construction of a family of stationary moduli of continuity that are sufficiently strong to imply regularity and are preserved
by the evolution. However, not every initial data obeys a modulus of continuity from this family. Next, we construct related moduli of continuity that have a jump
at zero. For every initial data, we can find a modulus from this family that is obeyed. Finally, using the general criterion proved in the following section,
we prove that this discontinuous modulus of continuity improves to the regular one in a finite time: large scale regularity gradually propagates to smaller scales.
The proof is quite similar for the Burgers and modified SQG equations,
and is trickier in the SQG case. Nevertheless, the families of the moduli that we work with are the same for all equations. The heart of the proof is quite simple
(except harder SQG case) and is contained in Proposition~\ref{statmod} and Lemma~\ref{finlembur} of Section~\ref{burg}.

\section{The General Criterion}\label{gencrit}

In this section we state and prove a general criterion yielding nonlocal maximum principles for active scalars.
We will focus on the case where initial data is periodic.
Let us start by introducing some notation and terminology.
\begin{definition}\label{modcon}
We call a function $\omega(\xi): (0,\infty) \mapsto (0,\infty)$ a modulus of continuity if $\omega$ is increasing,
continuous on $(0,\infty),$ concave, and piecewise $C^2$ with one sided derivatives (possibly infinite at $\xi=0$)
defined at every point in $[0,\infty).$
We say that a function $f(x)$ obeys $\omega$ if $|f(x)-f(y)| < \omega(|x-y|)$
for all $x \ne y.$
\end{definition}
 Observe that, in contrast to \cite{KNV}, we do not define $\omega$ at zero and do not require that $\omega(0)=0.$
Thus some of our moduli of continuity may be obeyed by irregular or even discontinuous functions. %This is motivated by situation we will encounter
%in supercritical Burgers equation. The solutions may form singularities in this case, but eventually they do regularize. To have
%control over this process, we need sufficiently flexible framework. \\
%2. Generally speaking, our general criterion does not require $\omega$ to be concave or increasing. But we do not have
%any applications where such generalizations would be useful. \\

Next, suppose that we consider the active scalar equation \eqref{as1}, with $u$ determined by $\theta$ in a certain way.
%Let $u_1,\dots,u_d$ be the components of the vector field $u,$ and let $e_1,\dots,e_d$ be unit coordinate vectors.
Assume that if $\theta(x)$ obeys some modulus of continuity $\omega,$ it can be proved that
\begin{equation}\label{ucon12}
| (u(x+\xi e)-u(x))\cdot e | \leq \Omega(\xi)
\end{equation}
for every $x \in \Tm^d,$ $\xi \in (0,\infty)$ and every unit vector $e \in \Rm^d.$
If we are dealing with the time dependent modulus of continuity $\omega(\xi,t)$ for $\theta(x,t)$, we will denote the corresponding bound in \eqref{ucon12}
by $\Omega(\xi,t).$ The form of this $\Omega$ will depend on the precise link between $\theta$ and $u.$  Also, define
\begin{equation}\label{dtermest}
D_{\alpha}(\xi) = c_\alpha \left( \int\limits_0^{\xi/2}
\frac{\omega(\xi+2\eta)+\omega(\xi-2\eta)-2\omega(\xi)}{\eta^{1+2\alpha}}\,d\eta
+\int\limits_{\xi/2}^\infty
\frac{\omega(\xi+2\eta)-\omega(2\eta-\xi)-2\omega(\xi)}{\eta^{1+2\alpha}}\,d\eta
\right),
\end{equation}
where $c_\alpha$ are certain positive constants depending only on $\alpha$ to be
described later. If our modulus of continuity $\omega(\xi,t)$ is time dependent, we will reflect this in notation $D_\alpha(\xi,t).$
Observe that $D_\alpha(\xi)$ is always less than or equal to zero %if $\omega$ is a modulus of continuity
due to concavity of $\omega.$
%Clearly, both $\Omega$ and $D_\alpha$ depend on $\omega$ but we will suppress it in notation.

The following theorem is the main result of this section.

\begin{theorem}\label{mainthm}
Let $\theta(x,t)$ be a periodic smooth solution of \eqref{as1} with $\epsilon \geq 0.$ Suppose that
$\omega(\xi,t)$ is piecewise $C^1$ in time variable (with one-sided derivatives defined at all points) for each fixed $\xi >0,$ and that for each
fixed $t \geq 0,$ $\omega(\xi,t)$ is a modulus of continuity.
Assume in addition that for each $t \geq 0,$
either $\omega(0+,t)>0,$ or $\partial_\xi \omega(0+,t)=\infty,$ or $\partial^2_{\xi\xi}\omega(0+,t)=-\infty,$
and that $\omega(0+,t),$ $\partial_\xi \omega(0+,t)$ are continuous
in $t$ with values in $\Rm \cup \infty.$
Let initial data $\theta_0(x)$ obey $\omega(\xi,0) \equiv
\omega_0(\xi).$ Then $\theta(x,T)$ obeys modulus of continuity $\omega(\xi,T)$ provided that
$\omega(\xi,t)$ satisfies
\begin{equation}\label{keyineq}
\partial_t \omega(\xi,t) > \Omega(\xi,t) \partial_\xi
\omega(\xi,t) + D_{\alpha}(\xi,t)+2\epsilon \partial^2_{\xi\xi}\omega(\xi,t)
\end{equation}
for all $\xi>0,$ $T \geq t >0$ such that $\omega(\xi,t) \leq 2\|\theta(x,t)\|_{L^\infty}.$
In \eqref{keyineq}, at the points where $\partial_\xi
\omega(\xi,t)$ ($\partial_t
\omega(\xi,t)$) does not exist, the larger (smaller) value of the one-sided derivative should be taken.
\end{theorem}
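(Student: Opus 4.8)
The plan is to argue by contradiction, via the \emph{breakthrough scenario} of \cite{KNV}. I would track the quantity $B(t)=\sup_{x\neq y}\big(\theta(x,t)-\theta(y,t)-\omega(|x-y|,t)\big)$. Since $\theta_0$ obeys $\omega_0$ one has $B(0)<0$; assuming for contradiction that $\theta(\cdot,T)$ fails to obey $\omega(\cdot,T)$, let $t_0\in(0,T]$ be the first time with $B(t_0)=0$. The first task is to show this supremum is actually \emph{attained} at a genuine pair $x\neq y$ with $\xi:=|x-y|\in(0,\infty)$. Large separations are excluded because \eqref{keyineq} is only required where $\omega(\xi,t)\le 2\|\theta(\cdot,t)\|_{L^\infty}$: for larger $\xi$ one has $\theta(x,t)-\theta(y,t)\le 2\|\theta\|_{L^\infty}<\omega(\xi,t)$ automatically. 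Small separations are excluded precisely by the three alternatives imposed on $\omega$ at $0+$: if $\omega(0+,t_0)>0$, continuity of $\theta$ forbids $\theta(x)-\theta(y)\to\omega(0+)$ as $y\to x$; if $\partial_\xi\omega(0+,t_0)=\infty$, the bound $\theta(x)-\theta(y)\le\|\nabla\theta(t_0)\|_{L^\infty}\xi$ beats $\omega(\xi)$ for small $\xi$; and the borderline case $\partial^2_{\xi\xi}\omega(0+,t_0)=-\infty$ will be closed below using that $D_\alpha(\xi,t_0)\to-\infty$ as $\xi\to0$.

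Granting attainment at $(x,y)$, write $e=(x-y)/\xi$. Maximality in the spatial variables forces $\nabla\theta(x,t_0)=\nabla\theta(y,t_0)=\partial_\xi\omega(\xi,t_0)\,e$, and the second variation gives $\partial^2_{ee}\theta(x)-\partial^2_{ee}\theta(y)\le 2\,\partial^2_{\xi\xi}\omega(\xi,t_0)$ (for $d\ge2$ the transverse directions add the nonnegative terms $2(d-1)\partial_\xi\omega/\xi$, harmless in the applications where $\epsilon\to0$). Maximality in time, with the stated one-sided derivative conventions, yields $\partial_t[\theta(x,t)-\theta(y,t)]\big|_{t_0}\ge \partial_t\omega(\xi,t_0)$. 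I would then substitute \eqref{as1} together with the $\epsilon\Delta\theta$ regularization into the left-hand side, splitting $\partial_t[\theta(x)-\theta(y)]$ into a transport term, a fractional-dissipation term, and a viscosity term.

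The transport and viscosity terms are comparatively routine. Using $\nabla\theta(x)=\nabla\theta(y)=\partial_\xi\omega\,e$, the transport term equals $\partial_\xi\omega(\xi,t_0)\,(u(x)-u(y))\cdot e$, which by \eqref{ucon12} and $\partial_\xi\omega\ge0$ is at most $\Omega(\xi,t_0)\,\partial_\xi\omega(\xi,t_0)$. The viscosity term $\epsilon(\Delta\theta(x)-\Delta\theta(y))$ is at most $2\epsilon\,\partial^2_{\xi\xi}\omega(\xi,t_0)$ by the second-order computation above. Combining with the time inequality reduces everything to
\[
\partial_t\omega(\xi,t_0)\ \le\ \Omega(\xi,t_0)\,\partial_\xi\omega(\xi,t_0)\ +\ (\text{fractional term})\ +\ 2\epsilon\,\partial^2_{\xi\xi}\omega(\xi,t_0),
\]
so it suffices to show the fractional contribution is at most $D_\alpha(\xi,t_0)$; this would contradict the \emph{strict} inequality \eqref{keyineq} and finish the proof.

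The main obstacle is therefore the dissipation estimate. The contribution is $c_\alpha\,\mathrm{P.V.}\!\int \big([\theta(x+h)-\theta(x)]-[\theta(y+h)-\theta(y)]\big)|h|^{-d-2\alpha}\,dh$, and by rotation invariance I would reduce to the line through $x,y$. The maximizing property of $(x,y)$ gives, for every $z$, the two-sided bound $\omega(\xi)-\omega(|x-z|)\le\theta(z)-\theta(y)\le\omega(|z-y|)$, which controls the integrand. Folding $h\mapsto-h$ and substituting reproduces, for the near region $\eta<\xi/2$, exactly the concave second difference $\omega(\xi+2\eta)+\omega(\xi-2\eta)-2\omega(\xi)\le0$ appearing in \eqref{dtermest}; the far region $\eta>\xi/2$ needs the sharper bound producing $\omega(\xi+2\eta)-\omega(2\eta-\xi)-2\omega(\xi)$, and securing the favorable sign $-\omega(2\eta-\xi)$ (rather than the crude $+\omega(2\eta-\xi)$ that the naive modulus bound gives) is the crux; I would follow the refined argument of \cite{KNV}. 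This same far-region bound also yields $D_\alpha(\xi,t_0)\to-\infty$ as $\xi\to0$, which is exactly what is needed to close the borderline small-scale case $\partial^2_{\xi\xi}\omega(0+,t_0)=-\infty$ left open in the first paragraph. With the fractional term thus bounded by $D_\alpha(\xi,t_0)$, the strict inequality \eqref{keyineq} is contradicted and the theorem follows.
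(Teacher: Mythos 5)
Your overall architecture (breakthrough scenario, first-variation identities at the touching pair, transport bounded via \eqref{ucon12}, dissipation bounded by $D_\alpha$, contradiction with the strict inequality \eqref{keyineq}) matches the paper's proof, which runs through Lemma~\ref{scenlemma} and Proposition~\ref{termsests}; your difference-based time argument is equivalent to the paper's ratio argument. But there is one genuine gap: your treatment of the borderline case $\partial^2_{\xi\xi}\omega(0+,t_0)=-\infty$ (with $\omega(0+,t_0)=0$ and $\partial_\xi\omega(0+,t_0)<\infty$). You propose to "close it below" using $D_\alpha(\xi,t_0)\to-\infty$ as $\xi\to 0$, but this claim is false in exactly the regime of interest: take $\omega(\xi)=\xi-\xi^{3/2}$ near zero (concavely continued), so $\omega''(0+)=-\infty$ while $\omega'(0+)=1$; then the first integral in \eqref{dtermest} is of size $\xi^{3/2-2\alpha}$ and the second of size $\xi^{1-2\alpha}$, so $D_\alpha(\xi)\to 0$ for all $\alpha<1/2$ --- the supercritical range the whole paper works in. Worse, even where the blow-up did hold, it cannot repair the argument structurally: the evolution estimates are evaluated \emph{at} a touching pair, and the whole problem in this case is that the supremum may be approached only as $\xi\to 0$, so there is no point at which to invoke $D_\alpha$ at all. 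The paper closes this case by a purely static argument: assuming no touching pair at $t_1$, if $|\nabla\theta(x,t_1)|=\partial_\xi\omega(0+,t_1)$ at some $x$, then $\omega''(0+)=-\infty$ forces $\omega(\xi)<\partial_\xi\omega(0+)\xi-C\xi^2$ for every $C$ at small $\xi$, so the mean value theorem produces a nearby $y$ violating the modulus --- hence $\|\nabla\theta(\cdot,t_1)\|_{L^\infty}<\partial_\xi\omega(0+,t_1)$ strictly, and this (together with the assumed continuity in $t$ of $\omega(0+,t)$ and $\partial_\xi\omega(0+,t)$, which your proof never uses but the theorem hypothesizes for exactly this purpose) rules out small separations and yields the touching pair by compactness.

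A second, smaller flaw: your second-variation computation is internally inconsistent. Varying only one endpoint transversally gives $\partial^2_{vv}\theta(x)\le\partial_\xi\omega/\xi$, producing the extra term $2(d-1)\partial_\xi\omega/\xi$ you mention; with that bound the viscosity term is \emph{not} controlled by $2\epsilon\,\partial^2_{\xi\xi}\omega(\xi,t_0)$ as the theorem requires (and dismissing it "as $\epsilon\to 0$" does not prove the stated theorem, which holds for each fixed $\epsilon\ge 0$). The fix is the paper's device: shift \emph{both} $x$ and $y$ by the same transverse vector, keeping $|x-y|$ constant, which gives $\partial^2_{vv}\theta(x)-\partial^2_{vv}\theta(y)\le 0$ for every $v\perp l$ and hence the clean bound $\Delta\theta(x)-\Delta\theta(y)\le 2\,\partial^2_{\xi\xi}\omega(\xi,t_0)$. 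Your dissipation sketch is acceptable in outline (the paper likewise defers the far-region refinement to Section 5 of \cite{KNV}, though it works with the semigroup kernel $\cP^{\alpha,d}_h$ and its properties \eqref{Phi} rather than the direct principal-value kernel), but as written the proposal does not prove the theorem until the two gaps above are repaired.
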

\noindent \it Remarks. \rm
%1. Theorem~\ref{mainthm} can be applied both in $\epsilon=0$ case and to viscosity-regularized solutions of active
%scalar equations. \\
 % Theorem~\ref{mainthm} may look nicer if we just assumed that
%$\omega$ is smooth away from zero. But
1. In applications, it is often convenient to take $\omega$
with a jump in the first and higher order derivatives - due to the different nature of balance between dissipation
and nonlinearity for small and large values of $\xi.$
This jump can usually be avoided, but it handily simplifies
the estimates. Thus it is useful to have Theorem~\ref{mainthm} stated in the form allowing less regularity for $\omega.$\\
2. The condition that \eqref{keyineq} holds only for $\xi,t$ for which
$\omega(\xi,t) \leq 2\|\theta(x,t)\|_{L^\infty}$ is natural, since other values of $\xi,t$ are not relevant
for the dynamics. It is useful in some applications such as for example proof of the existence of solutions of the
 critical Burgers equation with rough initial data (see \cite{KNS}). \\ %This condition is also in principle not crucial and can be instead addressed by modifying $\omega$ in an application at
%hand.  \\
3. The proof of Theorem~\ref{mainthm} parallels closely the original argument of \cite{KNV}. The main difference is that
$\Omega(\xi)$ in \cite{KNV} is just the modulus of continuity of $u$ provided that $\theta$ obeys $\omega.$ The improvement provided
by \eqref{ucon12} will be essential for the application to the SQG equation in Section~\ref{sqg}. \\

Thus the regularity properties of an active scalar are related to
supersolutions of a strongly nonlinear Burgers-type equation
\eqref{keyineq}, with key terms determined by the nature of vector
field $u$ and strength of dissipation. Dissipation terms which are
more general than $(-\Delta)^\alpha$ can also be studied.

%The proof of Theorem~\ref{mainthm} parallels closely the original argument of \cite{KNV}.
The first step in the proof of Theorem~\ref{mainthm} is the
following lemma, identifying the scenario how a modulus of
continuity may be lost.

\begin{lemma}\label{scenlemma}
Under conditions of Theorem~\ref{mainthm}, suppose that for some
$t>0$ the solution $\theta(x,t)$ no longer obeys $\omega(\xi,t).$ Then
there must exist $t_1 > 0$ and $x \ne y$ such that for all $t < t_1,$
$\theta(x,t)$ obeys $\omega(\xi,t),$ while
\begin{equation}\label{brscen11} \theta(x,t_1) - \theta(y,t_1) = \omega(|x-y|,t_1). \end{equation}
\end{lemma}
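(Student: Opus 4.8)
The plan is to identify $t_1$ as the first time at which the strict modulus inequality can fail, and then to show that at this instant the inequality is actually attained at a genuine pair $x\neq y$ rather than degenerating in the limit $x\to y$. Since ``$\theta(\cdot,t)$ obeys $\omega(\cdot,t)$'' means $\theta(x,t)-\theta(y,t)<\omega(|x-y|,t)$ for every ordered pair $x\neq y$ (the absolute value in Definition~\ref{modcon} being recovered by interchanging $x$ and $y$), I would work with the function $F_t(x,y):=\theta(x,t)-\theta(y,t)-\omega(|x-y|,t)$ on $(\Tm^d\times\Tm^d)\setminus\{x=y\}$; obeying $\omega$ at time $t$ is exactly $F_t<0$ everywhere. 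Note $F_t<0$ automatically whenever $\omega(|x-y|,t)>2\|\theta(\cdot,t)\|_{L^\infty}$, so that (consistently with the restriction in Theorem~\ref{mainthm}) only a bounded range of $\xi=|x-y|$ is relevant to a loss. I would then set
\[
t_1:=\sup\{\tau\ge 0:\ \theta(\cdot,t)\ \text{obeys}\ \omega(\cdot,t)\ \text{for all}\ t\in[0,\tau]\},
\]
which is finite by the assumption that the modulus is eventually lost, and for which ``$\theta(\cdot,t)$ obeys $\omega(\cdot,t)$ for all $t<t_1$'' holds by construction.

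The heart of the argument, and the step I expect to be the main obstacle, is to exclude loss of the modulus near the diagonal, using the hypothesis on $\omega$ at $0+$. Since $\theta$ is a smooth solution, $L(t):=\|\nabla\theta(\cdot,t)\|_{L^\infty}$ and $\|D^2\theta(\cdot,t)\|_{L^\infty}$ are finite and continuous in $t$; writing $g_t(\xi):=\sup_{|x-y|=\xi}(\theta(x,t)-\theta(y,t))$ one has $g_t(\xi)\le L(t)\xi$ with a quadratic correction controlled by $\|D^2\theta\|_{L^\infty}$. I would produce $r_0>0$ and a $t$-neighborhood of $t_1$ on which $g_t(\xi)<\omega(\xi,t)$ for all $0<\xi<r_0$. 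When $\omega(0+,t_1)>0$ this is immediate: continuity of $\omega(0+,\cdot)$ and $L(\cdot)$ gives $r_0$ with $L(t)r_0<\tfrac12\omega(0+,t)\le\tfrac12\omega(\xi,t)$. When $\omega(0+,t_1)=0$ everything rests on the first-order comparison $g_t(\xi)/\xi\to L(t)$ versus $\omega(\xi,t)/\xi\to\partial_\xi\omega(0+,t)$, and I must upgrade the obvious bound $L(t)\le\partial_\xi\omega(0+,t)$ to the \emph{strict} inequality $L(t)<\partial_\xi\omega(0+,t)$. If $\partial_\xi\omega(0+,t)=\infty$ this is automatic. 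The delicate case is $\partial_\xi\omega(0+,t)=B(t)<\infty$, where the hypothesis forces $\partial^2_{\xi\xi}\omega(0+,t)=-\infty$: here I claim that obedience alone (even the non-strict obedience available at $t=t_1$) forces $L(t)<B(t)$. Indeed, were $|\nabla\theta(x_0,t)|=B(t)$ at some $x_0$, then choosing $y=x_0+\xi e$ with $e=-\nabla\theta(x_0,t)/B(t)$ and Taylor-expanding gives $\theta(x_0,t)-\theta(y,t)=B(t)\xi+O(\xi^2)$, whereas concavity with $\partial^2_{\xi\xi}\omega(0+,t)=-\infty$ yields $\omega(\xi,t)=B(t)\xi-\rho(\xi,t)$ with $\rho(\xi,t)/\xi^2\to\infty$; hence $\theta(x_0,t)-\theta(y,t)>\omega(\xi,t)$ for small $\xi$, contradicting obedience. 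Obtaining $L(t)<B(t)$ in this borderline manner, and propagating the resulting first-order gap (and so $r_0$) to a neighborhood of $t_1$ via continuity of $L(\cdot)$ and $\partial_\xi\omega(0+,\cdot)$, is the technical crux; once $L(t)<B(t)$ is in hand, concavity gives $\omega(\xi,t)/\xi>\tfrac12(L(t)+B(t))$ for small $\xi$, whence $g_t(\xi)\le L(t)\xi+C\xi^2<\omega(\xi,t)$.

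With the diagonal excluded, the remaining pairs lie in the compact set $K:=\{(x,y)\in\Tm^d\times\Tm^d:\ |x-y|\ge r_0\}$, on which $F_t$ is continuous in $(x,y)$ and depends on $t$ through $\theta$ and $\omega$, converging uniformly as $t\to t_1$ (from smoothness of $\theta$ and from local uniform convergence of the concave increasing functions $\omega(\cdot,t)$). For $t<t_1$ we have $F_t<0$ everywhere, so letting $t\uparrow t_1$ gives $F_{t_1}\le 0$ everywhere, i.e. $\theta(\cdot,t_1)$ obeys $\omega(\cdot,t_1)$ in the non-strict sense (this is exactly the obedience used in the previous paragraph). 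If in addition $F_{t_1}<0$ on all of $K$, then $\max_K F_{t_1}<0$, and uniform convergence together with the near-diagonal bound would give $F_t<0$ on the whole set for $t$ in a right-neighborhood of $t_1$, contradicting the definition of $t_1$. Hence $\max_K F_{t_1}=0$ is attained at some $(x,y)$ with $|x-y|\ge r_0$, that is $\theta(x,t_1)-\theta(y,t_1)=\omega(|x-y|,t_1)$ with $x\neq y$. Running the same compactness-and-continuity argument at $t=0$, where $\theta_0$ obeys $\omega_0$ strictly, shows $F_t<0$ for all small $t$ and hence $t_1>0$, which completes the proof.
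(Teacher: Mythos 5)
Your proof is correct and follows essentially the same route as the paper's: you define $t_1$ as the supremum of times of obedience, rule out near-diagonal breakdown via the three hypotheses at $\xi = 0+$ (your Taylor-expansion argument for upgrading $\|\nabla\theta\|_{L^\infty} \le \partial_\xi\omega(0+,t)$ to a strict inequality using $\partial^2_{\xi\xi}\omega(0+,t)=-\infty$ is the paper's mean value theorem argument spelled out), and conclude by compactness and continuity in time away from the diagonal. The only cosmetic differences are that you work with the difference $F_t(x,y)$ where the paper uses the ratio $f(x,y,t)=|\theta(x,t)-\theta(y,t)|/\omega(|x-y|,t)$, and that you explicitly verify $t_1>0$, which the paper leaves implicit.
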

\begin{proof}
Define $t_1$ as the supremum of all times $t$ such that $\theta(x,s)$ obeys $\omega(\xi,s)$ for all $s<t.$
Observe that we have $|\theta(x,t_1)-\theta(y,t_1)| \leq \omega(|x-y|,t_1)$ for all $x,y \in \Tm^d.$ Indeed, if for
some $x,y$ we had $\theta(x,t_1)-\theta(y,t_1)>\omega(|x-y|,t_1),$ then by continuity in time
we have the same inequality for all $t<t_1$ sufficiently close to $t_1,$ which is impossible
by definition of $t_1.$ Consider
\[ f(x,y,t_1)=\frac{|\,\theta(x,t_1)-\theta(y,t_1)|}{\omega(|x-y|,t_1)}, \]
defined for all $x,y \in \Tm^d,$ $x \ne y.$ %Since $\theta(x,t_1)$ is smooth, $f(x,y,t_1)$ can be extended to $\Tm^d \times \Tm^d$ by continuity.
We showed $f(x,y,t_1) \leq 1.$ If $f(x,y,t_1)=1$ for some
$x,y,$ the proof is completed. So assume, on the contrary, that $f(x,y,t_1) <1$ for all $x \ne y.$
We will show that in this case $f(x,y,t)<1$ for all $x \ne y \in \Tm^d$ and $t \in [t_1,t_1+h]$ with some small $h>0,$ contradicting the definition of $t_1.$

First, we show that $f(x,y,t)$ remains less than $1$ for close values of $x,y.$
Namely, we claim there exists $h>0,$ $\kappa >0$ and $\rho<1$ such that if $|x-y| < \kappa,$ then for all $t\in [t_1,t_1+h]$ we have $f(x,y,t)< \rho.$
If $\omega(0+,t_1)>0$ or $\partial_\xi\omega(0+,t_1) =\infty,$ by continuity in time this is immediate. If $\partial_\xi\omega(0+,t_1)$
is finite, then we have $\|\nabla \theta (\cdot,t_1)\|_{L^\infty} < \partial_\xi\omega(0+,t_1).$ This follows from
the condition $\partial^2_{\xi\xi}\omega(0+,t_1)=-\infty$. Indeed, if we had $|\nabla \theta(x,t_1)| =\partial_\xi\omega(0+,t_1)$
for some $x,$ then applying mean value theorem we could find $y$ in a small neighborhood of $x$ such that
$|\theta(x,t_1)-\theta(y,t_1)| > \omega(|x-y|,t_1)$ - a contradiction.
%The $L^\infty$ norm bound then follows by compactness of $\Tm^d.$
By continuity in time, the bound $\|\nabla \theta (\cdot ,t)\|_{L^\infty} < \partial_\xi\omega(0+,t)$ extends
to a small interval $[t_1,t_1+h].$ Set \[ \rho = \frac12 + \frac12 {\rm sup}_{t \in [t_1,t_1+h]} \frac{\|\nabla \theta (\cdot ,t)\|_{L^\infty}}
{\partial_\xi\omega(0+,t)}<1. \]
%Since $\theta$ is smooth in $x,t$, by mean value theorem for every $(x,t) \in \Tm^d \times [t_1,t_1+h]$ there exists $\kappa (x,t)>0$ such that
%$f(x,y,t) < \rho$ if $|x-y| < \kappa(x,t).$ The function $\kappa(x,t)$ can be taken continuous in $(x,t).$ By compactness, $\kappa(x,t)$ has a minimum which we can take as $\kappa.$
Using smoothness of $\theta(x,t)$ and compactness of $\Tm^d$ and $[t_1,t_1+h],$ it is straightforward to show that there exists $\kappa >0$ such that
$f(x,y,t) < \rho$ if $|x-y| < \kappa,$ $t \in [t_1,t_1+h].$
This completes the treatment of $x,y$ which are close.

But now we are left with a continuous function $f(x,y,t)$ on a compact set $K \equiv (\Tm^d \times \Tm^d) \setminus \{ |x-y|<\kappa \},$ and
$f(x,y,t_1)<1$ on $K.$ By continuity in time, there exists a small interval of time past $t_1$ where this inequality is preserved.
This gives us a contradiction with the choice of $t_1.$ Hence there must exist $x,y$ where \eqref{brscen11} holds.
\end{proof}

\begin{proposition}\label{termsests}
Let $\theta$ be a smooth periodic solution of \eqref{as1}.
Assume we are in the situation of Lemma~\ref{scenlemma}, namely that $\theta(x,t_1)-\theta(y,t_1)= \omega(|x-y|,t_1)$
and $t_1$ is the minimal time when $\omega(\xi,t)$ is not obeyed. Then
\begin{equation}\label{gencrkey12}  \partial_t \left. (\theta(x,t)-\theta(y,t)) \right|_{t=t_1} \leq \Omega(|x-y|,t_1) \partial_\xi \omega(|x-y|,t_1)+D_\alpha(|x-y|,t_1)+2\epsilon \partial^2_{\xi\xi}
\omega(|x-y|,t_1). \end{equation}
\end{proposition}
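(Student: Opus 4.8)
My plan is to differentiate the gap $\theta(x,t)-\theta(y,t)$ at $t=t_1$ using \eqref{as1} together with the viscous term $\epsilon\Delta\theta$, and to bound the three resulting contributions one at a time. The starting observation is that, by Lemma~\ref{scenlemma}, at the first breakdown time $t_1$ the function $F(x',y'):=\theta(x',t_1)-\theta(y',t_1)-\omega(|x'-y'|,t_1)$ has maximum value $0$, attained at the interior point $(x,y)$ with $x\ne y$. Writing $\xi=|x-y|$ and $e=(x-y)/\xi$, the first-order conditions $\nabla_{x'}F=\nabla_{y'}F=0$ give $\nabla\theta(x)=\nabla\theta(y)=\partial_\xi\omega(\xi)\,e$, and negative semidefiniteness of the Hessian of $F$ at $(x,y)$ supplies the second-order inequalities I will use below. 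I would record these relations first, since they are exactly what converts the three terms into derivatives of $\omega$.

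The transport term is then immediate: since both gradients equal $\partial_\xi\omega(\xi)\,e$, it reduces to $\partial_\xi\omega(\xi)\,(u(x)-u(y))\cdot e$, and because $x=y+\xi e$ and $\partial_\xi\omega(\xi)\ge0$, the structural hypothesis \eqref{ucon12} bounds it by $\Omega(\xi)\,\partial_\xi\omega(\xi)$. For the viscous term $\epsilon(\Delta\theta(x)-\Delta\theta(y))$ I would read off the Hessian inequality in well-chosen directions: varying $x$ alone and $y$ alone along $e$ gives $\partial_{ee}\theta(x)\le\partial^2_{\xi\xi}\omega(\xi)$ and $\partial_{ee}\theta(y)\ge-\partial^2_{\xi\xi}\omega(\xi)$, hence $\partial_{ee}\theta(x)-\partial_{ee}\theta(y)\le2\partial^2_{\xi\xi}\omega(\xi)$, while varying $x$ and $y$ simultaneously along any transverse direction $v\perp e$ (which keeps the separation equal to $\xi$) gives $\partial_{vv}\theta(x)-\partial_{vv}\theta(y)\le0$. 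Summing over an orthonormal frame yields $\Delta\theta(x)-\Delta\theta(y)\le2\partial^2_{\xi\xi}\omega(\xi)$, accounting for the last term of \eqref{gencrkey12}.

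The remaining term $(-\Delta)^\alpha\theta(x)-(-\Delta)^\alpha\theta(y)$ is the heart of the argument and the step I expect to be the main obstacle. Here I would use the singular-integral representation of $(-\Delta)^\alpha$, symmetrize it about $x$ and about $y$, and reduce the $d$-dimensional kernel to the line spanned by $e$ (the transverse integration being absorbed into the constant $c_\alpha$ of \eqref{dtermest}), so that the term becomes an integral against $\eta^{-1-2\alpha}$ of the second difference $\theta(x+\eta e)+\theta(x-\eta e)-\theta(y+\eta e)-\theta(y-\eta e)-2\omega(\xi)$. In the near region $0<\eta<\xi/2$ the estimate is transparent: pairing the reflected points as $(x+\eta e,\,y-\eta e)$ and $(x-\eta e,\,y+\eta e)$, whose separations are exactly $\xi+2\eta$ and $\xi-2\eta$, and using that $\theta$ obeys $\omega$, bounds the integrand by $\omega(\xi+2\eta)+\omega(\xi-2\eta)-2\omega(\xi)$; the symmetrization cancels the first-order part, so this integrand is $O(\eta^{1-2\alpha})$ as $\eta\to0$ and the principal value causes no trouble, while concavity of $\omega$ keeps it nonpositive. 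The delicate part is the far region $\eta>\xi/2$, where the two inner reflected points cross and a crude use of the modulus at separation $2\eta-\xi$ is too lossy to produce the sharp integrand $\omega(\xi+2\eta)-\omega(2\eta-\xi)-2\omega(\xi)$ of \eqref{dtermest}. Obtaining this sharper bound forces one to use the touching \emph{equality} $\theta(x)-\theta(y)=\omega(\xi)$ itself (and the regularity of $\theta$), not merely the fact that $\theta$ obeys $\omega$; this far-field estimate is where the real work lies. Once it is in place, adding the three bounds gives precisely \eqref{gencrkey12}.
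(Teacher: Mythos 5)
Your handling of the transport and viscous terms coincides with the paper's (the first- and second-order conditions at the touching point, phrased there as ``shift the point and contradict the choice of $t_1$''), so those parts are fine. The genuine gap is exactly where you place it, in the dissipative term, and your sketch of that term contains a step that fails and a step that is left open. The failing step is the dimension reduction: for $d\ge 2$ the transverse integration cannot be ``absorbed into the constant $c_\alpha$,'' because after writing $z=\eta e+\nu$ the numerator $\theta(x+z)+\theta(x-z)-\theta(y+z)-\theta(y-z)-2\omega(\xi)$ depends on $\nu$, and your cross-pairing $(x+z,\,y-z)$ has separation $\sqrt{(\xi+2\eta)^2+4|\nu|^2}$, so the modulus bound degrades with $\nu$ in the wrong direction ($\omega$ is increasing); no pointwise on-axis reduction exists. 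Indeed, Section~\ref{sqg} of the paper shows that transverse variation of $\theta$ genuinely changes the dissipation (the extra term $D^\perp_\alpha$), so any argument pretending the problem is one-dimensional along $e$ must fail somewhere.

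The paper's device repairs both difficulties at once, and it is different from what you conjecture. One symmetrizes about the \emph{midpoint}, not about $x$ and $y$: with $x=(\xi/2,0)$, $y=(-\xi/2,0)$, one uses $\theta(\eta,\nu)-\theta(-\eta,\nu)\le\omega(2\eta)$, whose separation is exactly $2\eta$ \emph{independently of} $\nu$, inserted against the kernel difference $\cP^{\alpha,2}_h(\tfrac{\xi}{2}-\eta,\nu)-\cP^{\alpha,2}_h(-\tfrac{\xi}{2}-\eta,\nu)$ of the semigroup $e^{-(-\Delta)^\alpha h}$, which is nonnegative for $\eta>0$ by the radial monotonicity in \eqref{Phi}. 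Then the transverse integration is exact, since $\int_{\Rm}\cP^{\alpha,2}_h(\eta,\nu)\,d\nu=\cP^{\alpha,1}_h(\eta)$, and an elementary change of variables in $\int_0^\infty[\cP^{\alpha,1}_h(\tfrac{\xi}{2}-\eta)-\cP^{\alpha,1}_h(\tfrac{\xi}{2}+\eta)]\,\omega(2\eta)\,d\eta$ produces the near/far form of \eqref{dtermest}: the minus sign on $\omega(2\eta-\xi)$ comes out of this rearrangement, not from any pointwise estimate at separation $2\eta-\xi$. Your guess that the touching equality must power the far-field bound is therefore off: the equality enters only to make the subtracted term exactly $\omega(\xi)$, while the sharp far integrand uses only that $\theta$ obeys $\omega$ plus kernel positivity and monotonicity. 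In fact the pointwise bound your route would need, $\theta(x-\eta e)-\theta(y+\eta e)\le-\omega(2\eta-\xi)$ for $\eta>\xi/2$, is simply false in general (flatten $\theta$ away from the touching points while keeping the equality at $x,y$). Finally, the passage $h\to0$ uses the two-sided bounds in \eqref{Phi} and the scaling \eqref{Phi1} — legitimate because concavity makes both integrands nonpositive, so the kernel may be replaced by its lower bound — which is why $c_\alpha=C_1(\alpha)$ rather than the normalizing constant of the singular-integral representation you start from. As written, your proposal establishes \eqref{gencrkey12} only modulo this entire mechanism, i.e., it leaves the heart of Proposition~\ref{termsests} unproved.
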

% Since $\partial^2_{\xi\xi}\omega(\xi,t_1) \leq 0$ for all $\xi,$
Due to concavity of $\omega,$
the last term we get in \eqref{gencrkey12} is never positive.
Therefore, in applications, we can just ignore this term in all estimates, making sure \eqref{gencrkey12} holds without this term. The resulting $\omega(\xi,t)$
will be obeyed by solutions of \eqref{asd1} independently of $\epsilon \geq 0.$
\begin{proof}
Set $\xi = |x-y|.$ We have
\begin{eqnarray}\label{eq12}
\partial_t \left.(\theta(x,t)-\theta(y,t))\right|_{t=t_1} = \\
\nonumber (u \cdot \nabla)\theta(x,t_1) - (u \cdot \nabla)\theta(y,t_1) - (-\Delta)^\alpha \theta(x,t_1) + (-\Delta)^\alpha \theta(y,t_1)+\epsilon
\Delta \theta(x,t_1)-\epsilon \Delta \theta(y,t_1).
\end{eqnarray}
We claim that the combination of the first two terms can be estimated from the above by $\Omega(\xi,t_1)\partial_\xi \omega(\xi,t_1).$ %This is
%stronger than the original estimate of \cite{KNV}, where $\Omega(\xi)$ stood to denote the modulus of continuity of vector $u.$
To prove this estimate, let us denote by $l$ the unit vector pointing from point $y$ to point $x.$ First, we claim that $\partial_l \theta(x,t)$
is equal to $\partial_\xi\omega(\xi,t_1).$ Indeed, if not, we could shift point $x$ along the direction $l$ to a new position $x'$ and obtain a pair of points $x',$ $y$
where $\theta(x',t_1)-\theta(y,t_1) > \omega(|x'-y|,t_1),$ a contradiction with the choice of $t_1.$ Similarly, $\partial_l \theta(y,t_1) =\partial_\xi\omega(\xi,t_1).$
Also, for every direction $v$ orthogonal to $l,$
$\partial_v \theta(x,t_1)=0.$ If not, we could move $x$ a little in a direction of increase along $v$ and obtain a contradiction, similarly to the
previous argument. In the same fashion, $\partial_v \theta(y,t_1)=0$ for every $v$ orthogonal to $l.$
Therefore,
\[ \left|(u \cdot \nabla)\theta(x,t_1) - (u \cdot \nabla)\theta(y,t_1) \right|= \left|((u(x,t_1)-u(y,t_1))\cdot l)\partial_\xi \omega(\xi,t_1)\right| \leq \Omega(\xi,t_1)
\partial_\xi \omega(\xi,t_1). \]
In the last step we used \eqref{ucon12} and the fact that by definition $x-y = \xi l.$

Consider now the last two terms. First, we claim that for every
$v$ orthogonal to $l$ we have $\partial^2_{vv} \theta(x,t_1)-\partial_{vv}^2 \theta(y,t_1) \leq 0.$ If not, given that $\partial_v \theta(x,t_1)=\partial_v \theta(y,t_1)=0$,
we could shift both $x$ and $y$ along $v,$ keeping distance between them constant, and obtain a contradiction using mean value theorem. Finally, observe that since $\partial_l \theta(x,t)=\partial_l \theta(y,t)=
\partial_\xi\omega(\xi,t_1),$ in order for $\omega$ not to be violated with a strict inequality at some points near $x,y$ we also need $\partial^2_{ll}\theta(x,t_1)
\leq \partial^2_{\xi\xi} \omega(\xi,t_1)$ and $\partial^2_{ll}\theta(y,t_1) \geq -\partial^2_{\xi\xi}\omega(\xi,t_1).$ Combining all estimates for the second order terms, we get
$\Delta\theta(x) -\Delta\theta(y) \leq 2\partial^2_{\xi\xi}\omega(\xi,t_1).$

To estimate the third and fourth terms, let us introduce the
semigroup $e^{-(-\Delta)^\alpha t}$ generated by $-(-\Delta)^\alpha,$ and its kernel $\cP^{\alpha,d}_t(x)$ where
\begin{equation}\label{Phi1}
\cP^{\alpha,d}_t(x) = t^{-d/2\alpha} \cP^{\alpha,d}(t^{-1/2\alpha}x), \,\,\,\cP^{\alpha,d}(x) =
\frac{1}{2\pi}\int_{{\Rm}^d} \exp (i xk-|k|^{2\alpha}) \,dk.
\end{equation}
It is evident that $\cP^{\alpha,d}(x)$ is radially symmetric and $\int \cP^{\alpha,d}(x)\,dx=1.$  We
will need the following further properties of the function $\cP^{\alpha,d}:$
\begin{equation}\label{Phi}
\cP^{\alpha,d}(x) >0;\,\,\,\partial_r\cP^{\alpha,d}(x) \leq 0,\,\,\,\frac{C_1(\alpha)}{1+|x|^{d+2\alpha}} \leq \cP^{\alpha,d}(x) \leq
\frac{C_2(\alpha)}{1+|x|^{d+2\alpha}}.
\end{equation}
First two properties are not difficult to prove; see e.g.
\cite{Feller} for some results, in particular positivity (Theorem
XIII.6.1). See \cite{BSS} for the third property.

We remark that Caffarelli and Silvestre \cite{CS} provide an alternative way to generate fractional Lapalcian.
This approach has an advantage of having completely explicit kernel, and can also be used here.

The combination of the third and fourth terms in \eqref{eq12} can be written as
\[ - (-\Delta)^\alpha \theta(x,t_1) + (-\Delta)^\alpha \theta(y,t_1)= \lim_{h \rightarrow 0}\frac{1}{h}\left(
(\cP^{\alpha,d}_h * \theta)(x,t_1)-(\cP^{\alpha,d}_h * \theta)(y,t_1) -\theta(x,t_1)+\theta(y,t_1) \right). \]
Here in convolution we use the $\Rm^d$ kernel $\cP^{\alpha,d}_h$ and integrate over $\Rm^d,$ extending $\theta$ periodically -
%by Poisson summation formula
this is the same as working on $\Tm^d$.
The last two terms on the right hand side are exactly $-\omega(\xi,t_1).$ The first two terms can be estimated from above,
identically to the argument in Section 5 of \cite{KNV}, by
\[ \int_0^{\xi/2} \cP^{\alpha,1}_h[\omega(\xi+2\eta,t_1)+\omega(\xi-2\eta,t_1)]\,d\eta + \int_{\xi/2}^\infty \cP^{\alpha,1}_h(\eta)[\omega(2\eta+\xi,t_1)
-\omega(2\eta-\xi,t_1)]\,d\eta \]
(prior to division by $h$). We will review this derivation in Section~\ref{sqg} where we will need an improvement.
The first two properties \eqref{Phi} are used in this derivation. Finally, subtracting $\omega(\xi,t),$ dividing by $h,$ passing to the limit
and using the third property \eqref{Phi} and scaling \eqref{Phi1} we arrive at $D_\alpha(\xi,t)$ with $c_\alpha$ in \eqref{dtermest} equal to
$C_1(\alpha).$
\end{proof}

Now we are ready to prove Theorem~\ref{mainthm}.
\begin{proof}[Proof of Theorem~\ref{mainthm}]
Assume that Theorem is not true. Then by Lemma~\ref{scenlemma}, there exists $t_1>0$ such that $\theta(x,t)$ obeys $\omega(\xi,t)$ for $t<t_1$
but $\theta(x,t_1)-\theta(y,t_1)= \omega(|x-y|,t_1)$ for some $x,y \in \Tm^d.$ Set $\xi = |x-y|.$ Then we have
\begin{equation}\label{critpoint12} \partial_t \left. \left[ \frac{\theta(x,t)-\theta(y,t)}{\omega(\xi,t)} \right]\right|_{t=t_1} \leq
\frac{\Omega(\xi,t_1) \partial_\xi \omega(\xi,t_1)+D_\alpha(\xi,t_1)+2\epsilon \partial^2_{\xi\xi}\omega(\xi,t_1)-\partial_t \omega(\xi,t_1)}{\omega(\xi,t_1)}. \end{equation}
We used Lemma~\ref{termsests} and the fact that $\theta(x,t_1)-\theta(y,t_1)=\omega(\xi,t_1).$
By \eqref{keyineq}, the numerator of the right hand side of \eqref{critpoint12} is negative. This means that for some time smaller than $t_1,$
$\theta(x,t)-\theta(y,t)$ already exceeded $\omega(\xi,t).$ Contradiction.
\end{proof}

%\section{Rough Initial Data}\label{rid}

%\section{Critical Modified SQG}\label{modSQG}

\section{Eventual Regularization in the Supercritical Case: the Burgers equation}\label{burg}

Here we consider the Burgers equation
\begin{equation}\label{fracbur}
\theta_t = \theta \theta_x -(-\Delta)^\alpha \theta + \epsilon \Delta \theta, \,\,\,\theta(x,0)=\theta_0(x).
\end{equation}
The Burgers equation with $\alpha<1/2$ is known to form shocks in finite time (\cite{KNS}). In this section we prove Theorems~\ref{mainbur} and \ref{secbur}, showing
that nevertheless the viscosity solutions of the supercritical Burgers equation with arbitrary $\alpha>0$ become regular after
some fixed time depending on $\alpha$ and the size of initial data. %We stress that the proof of eventual regularization is of a different

We start with a few comments on how Theorem~\ref{mainbur} implies Theorem~\ref{secbur}.
Local existence of solution, smooth for $t>0,$ for the initial data in $H^{3/2-2\alpha}$ has been established in \cite{KNS}.
Furthermore, for the supercritical
SQG equation it has been proved by Constantin and Wu \cite{CW} (in the whole space case) that if we have control of $C^\beta$ norm of the solution uniformly in time with $\beta>1-2\alpha,$
then the solution is regular and can be extended globally. Since the scaling of Burgers equation is similar to that of the SQG, one can expect a similar
result to be true in the case of the Burgers equation. We will state this property in a way fitting our regularization procedure.

\begin{proposition}\label{burreg56}
Suppose $0<\alpha <1/2.$
Assume that a solution $\theta(x,t)$ of \eqref{fracbur} satisfies for all $t \geq T$ the bound $\|\theta(\cdot,t)\|_{C^\beta} \leq B,$
for $\beta > 1-2\alpha.$ Then for every $s>0$ and $t > T$ we also have $\|\theta(\cdot,t)\|_{H^s} \leq f(B,s,t) < \infty.$
The function $f(B,s,t)$ does not depend on the value of $\epsilon$ in \eqref{fracbur}.
\end{proposition}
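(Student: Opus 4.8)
The plan is to run $\dot H^s$ energy estimates and to absorb the Burgers nonlinearity into the fractional dissipation, the hypothesis $\beta>1-2\alpha$ entering in an essential way; crucially, the whole argument will be phrased in terms of the bounds $\|\theta\|_{C^\beta}\le B$ and $\|\theta\|_{L^2}$ alone, so that the viscosity $\epsilon$ never appears in any constant. Write $\Lambda=(-\Delta)^{1/2}$ and set $y_s(t)=\|\Lambda^s\theta(\cdot,t)\|_{L^2}^2$. Pairing \eqref{fracbur} with $\Lambda^{2s}\theta$ gives, for $t\ge T$,
\begin{equation}
\tfrac12\, y_s'(t)=\int_{\Tm^1}\Lambda^s\theta\,\Lambda^s(\theta\theta_x)\,dx-\|\Lambda^{s+\alpha}\theta\|_{L^2}^2-\epsilon\|\Lambda^{s+1}\theta\|_{L^2}^2.
\end{equation}
The viscosity term is $\le 0$ and I would simply discard it; this is precisely what renders every subsequent bound independent of $\epsilon$.

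The heart of the matter is the nonlinear term $N_s:=\int_{\Tm^1}\Lambda^s\theta\,\Lambda^s(\theta\theta_x)\,dx$. Unlike the SQG setting of \cite{CW}, the Burgers velocity $u=\theta$ is \emph{not} divergence free, so the top-order local contribution $-\tfrac12\int\theta_x(\Lambda^s\theta)^2\,dx$ does not vanish and cannot be dropped by incompressibility; this is the main obstacle. Instead I would estimate $N_s$ as a whole by a fractional Leibniz / commutator bound of the form
\begin{equation}
|N_s|\le C\,\|\theta\|_{C^\beta}\,\|\Lambda^{s+\alpha}\theta\|_{L^2}\,\|\Lambda^{s+1-\alpha-\beta}\theta\|_{L^2},
\end{equation}
in which the low-regularity factor $\theta$ is measured in $C^\beta$ while the two $L^2$ factors carry $s+\alpha$ and $s+1-\alpha-\beta$ derivatives. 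The total derivative count $\beta+(s+\alpha)+(s+1-\alpha-\beta)=2s+1$ matches the nonlinearity, so the estimate is scaling correct. Proving this bound rigorously, by distributing the derivatives through a Littlewood--Paley/paraproduct decomposition and controlling the non-divergence-free piece with the Hölder norm, is the one genuinely technical step.

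The hypothesis $\beta>1-2\alpha$ is used exactly here: it is equivalent to $s+1-\alpha-\beta<s+\alpha$, so the second Sobolev index sits strictly below the dissipation index. Interpolating $\|\Lambda^{s+1-\alpha-\beta}\theta\|_{L^2}\le\|\theta\|_{L^2}^{1-\lambda}\|\Lambda^{s+\alpha}\theta\|_{L^2}^{\lambda}$ with $\lambda=(s+1-\alpha-\beta)/(s+\alpha)\in(0,1)$ (for large $s$; small $s$ follow from the monotonicity $\|\theta\|_{H^s}\le\|\theta\|_{H^{s'}}$, and if the index is negative one simply bounds by $\|\theta\|_{L^2}$), and using $\|\theta\|_{C^\beta}\le B$ together with $\|\theta\|_{L^2}\le C(B)$ (immediate since $C^\beta$ controls $L^\infty$ on $\Tm^1$), Young's inequality with exponent $1+\lambda<2$ yields
\begin{equation}
|N_s|\le\tfrac12\|\Lambda^{s+\alpha}\theta\|_{L^2}^2+C(B,s),
\end{equation}
so that $y_s'(t)\le-\|\Lambda^{s+\alpha}\theta\|_{L^2}^2+2C(B,s)$ for $t\ge T$, with $C(B,s)$ independent of $\epsilon$.

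It remains to convert this differential inequality into a pointwise-in-time bound that is finite for $t>T$ and uniform in $\epsilon$. One cannot simply invoke $y_s(T)<\infty$, since that value depends on $\epsilon$ (and is the reason the asserted bound $f(B,s,t)$ carries a $t$-dependence). Instead I would run a time-weighted regularity ladder rooted at the $L^2$ level. At $s=0$ the nonlinearity vanishes, $\int_{\Tm^1}\theta^2\theta_x\,dx=\tfrac13\int_{\Tm^1}\partial_x(\theta^3)\,dx=0$, giving $y_0\le C(B)$ and the integrated dissipation bound $\int_T^t\|\Lambda^\alpha\theta\|_{L^2}^2\,ds\le C(B)$. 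Feeding this into the inequality for $y_\alpha$ multiplied by a weight $(t-T)^{a_1}$, and iterating up the ladder $s=k\alpha$, $k=1,2,\dots$, the weights annihilate the unknown (possibly $\epsilon$-dependent) initial values and produce $\epsilon$-uniform bounds $\|\Lambda^{k\alpha}\theta(\cdot,t)\|_{L^2}^2\le C(B,k)(t-T)^{-a_k}$ for $t>T$. Since $k\alpha\to\infty$ and Sobolev norms are monotone in $s$, this delivers $\|\theta(\cdot,t)\|_{H^s}\le f(B,s,t)<\infty$ for every $s>0$ and $t>T$, with $f$ independent of $\epsilon$. The two places demanding care are the commutator estimate (the non-divergence-free term) and the bookkeeping of the ladder exponents $a_k$; the rest is routine.
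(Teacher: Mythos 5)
Your strategy is sound, and it should be said up front that the paper itself offers no detailed argument for Proposition~\ref{burreg56}: it only remarks that the proof ``can be carried out by methods adapted in a straightforward way'' from \cite{CW}, or alternatively via \cite{CCW}. Your route --- $\dot H^s$ energy estimates with the nonlinearity absorbed into the $\Lambda^{2\alpha}$ dissipation through a H\"older-weighted commutator bound --- is essentially the \cite{CCW}-style alternative the paper alludes to, whereas \cite{CW} proceeds instead by a Duhamel/Besov bootstrap, gaining H\"older regularity in increments of size $2\alpha-(1-\beta)>0$ via the smoothing of the semigroup; both hinge on the same numerology $\beta>1-2\alpha$. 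Your central claimed estimate $|N_s|\le C\|\theta\|_{C^\beta}\|\Lambda^{s+\alpha}\theta\|_{L^2}\|\Lambda^{s+1-\alpha-\beta}\theta\|_{L^2}$ is correct and provable along the lines you indicate: writing $\theta\theta_x=\tfrac12\partial_x(\theta^2)$ and using a Littlewood--Paley decomposition, the remainder term sums geometrically (using $s+1-\alpha>0$), while the dangerous high--low paraproduct --- the avatar of the local term $-\tfrac12\int\theta_x(\Lambda^s\theta)^2\,dx$ you correctly single out --- is tamed by symmetrization together with $\|\partial_x S_{j-1}\theta\|_{L^\infty}\lesssim 2^{j(1-\beta)}\|\theta\|_{C^\beta}$ (this is where $\beta<1$ enters; if $\beta\ge 1$ the naive commutator bound already suffices), after which Cauchy--Schwarz in $j$ produces exactly the two exchanged indices $s+\alpha$ and $s+1-\alpha-\beta$. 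The time-weighted ladder rooted at $L^2$ is also the right device and is a genuine contribution relative to the paper's one-line citation: it is precisely what makes $f(B,s,t)$ independent of $\epsilon$ (and explains its $t$-dependence), since $y_s(T)$ itself carries no $\epsilon$-uniform control; note the energy computations are legitimate because the $\epsilon>0$ solutions are smooth for $t>0$. Two bookkeeping caveats: the constant forcing $C(B,s)$ makes the ladder bounds grow polynomially in $t-T$, so either accept that growth in $f$ (permitted by the statement) or restart the ladder on unit intervals $[t-1,t]$ to obtain bounds uniform for $t\ge T+1$; and on $\Tm^1$ the factor $\|\Lambda^{s+1-\alpha-\beta}\theta\|_{L^2}$ with negative index should be interpreted on the mean-free part (the spatial mean is conserved and bounded by $B$), which is the trivial fix you gesture at.
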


The proof of Proposition~\ref{burreg56} can be carried out by methods adapted in a straightforward way from \cite{CW} (alternatively, the approach of
\cite{CCW} can also be used here).
%We will sketch the proof of Proposition~\ref{burreg56} in the appendix.
Together, Theorem~\ref{mainbur}, Proposition~\ref{burreg56} and standard arguments on approximations imply regularization of solution after some time $T,$
proving Theorem~\ref{secbur}. %Any singularities, if they appear,

We start the proof of Theorem~\ref{mainbur} by finding some stationary moduli of continuity that are preserved by the evolution in the supercritical Burgers.

\begin{proposition}\label{statmod}
Let $0<\alpha<1/2$ and $1>\beta > 1-2\alpha.$ Define
\begin{equation}\label{modcon1}
\omega(\xi) = \left\{ \begin{array}{ll} H (\xi/\delta)^\beta, & 0 < \xi \leq \delta \\
H, & \xi > \delta \end{array} \right.
\end{equation}
There exists a constant $C_1=C_1(\alpha,\beta)$ such that if $H \leq C_1 \delta^{1-2\alpha},$ the following is true.
If the initial data $\theta_0(x)$ obeys $\omega(\xi),$ the solution of \eqref{fracbur} $\theta(x,t)$ obeys $\omega(\xi)$ for every $t,$
independently of $\epsilon.$
\end{proposition}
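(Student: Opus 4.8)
The plan is to verify that the explicit modulus $\omega$ in \eqref{modcon1} satisfies the stationary form of the key inequality \eqref{keyineq}, so that Theorem~\ref{mainthm} applies. Since $\omega$ is time-independent, the left side $\partial_t\omega$ vanishes, and since we may ignore the dissipative regularization term $2\epsilon\,\partial^2_{\xi\xi}\omega$ (it is nonpositive by concavity, as noted after Proposition~\ref{termsests}), the goal reduces to establishing
\begin{equation}\label{burgtarget}
0 > \Omega(\xi)\,\partial_\xi\omega(\xi) + D_\alpha(\xi)
\end{equation}
for all $\xi > 0$ with $\omega(\xi) \leq 2\|\theta_0\|_{L^\infty}$. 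First I would record that for Burgers $u=\theta$ is a scalar in one dimension, so the natural bound in \eqref{ucon12} is simply $\Omega(\xi) = \omega(\xi)$; thus $\Omega(\xi)\partial_\xi\omega(\xi)$ is the explicit nonlinearity I must dominate by the (negative) dissipation $D_\alpha(\xi)$.

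\textbf{The two regimes.}
Because $\omega$ has a corner at $\xi=\delta$, I would treat the inner region $0<\xi\le\delta$ and the outer region $\xi>\delta$ separately. In the inner region $\omega(\xi)=H(\xi/\delta)^\beta$, so $\Omega(\xi)\partial_\xi\omega(\xi) = \beta H^2\delta^{-2\beta}\xi^{2\beta-1}$. For the dissipation, I would estimate $D_\alpha(\xi)$ from above by a negative multiple of $H\delta^{-\beta}\xi^{\beta-2\alpha}$: the pure power $\xi^\beta$ is self-similar, so on scales below $\delta$ the dominant contribution to the integral in \eqref{dtermest} scales like $\xi^{\beta-2\alpha}$ with a strictly negative constant coming from the strict concavity ($0<\beta<1$). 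Comparing exponents, the inequality \eqref{burgtarget} becomes, up to constants,
\begin{equation}\label{burgcompare}
\beta H^2\delta^{-2\beta}\xi^{2\beta-1} < c\,H\delta^{-\beta}\xi^{\beta-2\alpha},
\end{equation}
i.e. $H\,\xi^{\beta-1+2\alpha} < c\,\beta^{-1}\delta^{\beta}$. Since $\beta>1-2\alpha$ forces the exponent $\beta-1+2\alpha>0$, the left side is increasing in $\xi$ and is largest at $\xi=\delta$, where it reads $H\delta^{2\alpha+\beta-1} < c\beta^{-1}\delta^\beta$, i.e. $H < c\beta^{-1}\delta^{1-2\alpha}$ — exactly the hypothesis $H\le C_1\delta^{1-2\alpha}$. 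This is the calculation that pins down $C_1$.

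\textbf{Outer region and the main obstacle.}
In the outer region $\xi>\delta$ the function $\omega$ is constant, so $\partial_\xi\omega(\xi)=0$ and the nonlinear term drops out entirely; then \eqref{burgtarget} reduces to $D_\alpha(\xi)<0$, which holds automatically since $\omega$ is nonconstant (strictly concave) on the set $\{2\eta\pm\xi<\delta\}$, making the integrand in \eqref{dtermest} strictly negative on a set of positive measure. I expect the main technical obstacle to be the careful lower bound on $|D_\alpha(\xi)|$ in the inner region: one must extract a clean negative constant (uniform in $\xi\le\delta$) from \eqref{dtermest} despite the corner at $\delta$ and the change in the functional form of $\omega$ across it. The cleanest route is to split the $\eta$-integrals at the scale where $\xi+2\eta$ crosses $\delta$, bound the near-diagonal part using the strict concavity of the power $\xi^\beta$ (which yields the sharp $\xi^{\beta-2\alpha}$ scaling), and check that the far-field part, where $\omega$ saturates at $H$, only contributes an additional negative amount and hence cannot spoil the sign. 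A secondary point requiring care is the regime restriction $\omega(\xi)\le 2\|\theta_0\|_{L^\infty}$: on the constant part $\omega\equiv H$, and one must confirm that the hypotheses guarantee the relevant range of $\xi$ is covered, but since the outer inequality holds for all such $\xi$ unconditionally, this causes no difficulty.
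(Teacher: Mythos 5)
Your proposal is correct and follows essentially the same route as the paper: verify the stationary form of \eqref{keyineq} with $\Omega=\omega$, compute the nonlinearity $\beta H^2\delta^{-2\beta}\xi^{2\beta-1}$ on $0<\xi\le\delta$, dominate it by a dissipation bound of order $-H\delta^{-\beta}\xi^{\beta-2\alpha}$, and pin down $C_1$ by evaluating the resulting exponent inequality at $\xi=\delta$, with the region $\xi>\delta$ trivial since $\omega'=0$ there. The only cosmetic difference is in extracting the dissipation bound: where you propose splitting the $\eta$-integral at the scale where $\xi+2\eta$ crosses $\delta$, the paper gets the same $\xi^{\beta-2\alpha}$ bound in one line from $\omega(\xi+2\eta)+\omega(\xi-2\eta)-2\omega(\xi)\le C\omega''(\xi)\eta^2$ (using that $\omega''$ is negative and increasing, so only the left half of the second difference is needed), which avoids any case analysis near the corner.
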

The proposition gives us a family of moduli of continuity parameterized by $H$ and $\delta$ that are preserved by the evolution.
Of course, not every initial data will obey some moduli from this family, due to restriction $H \leq C_1 \delta^{1-2\alpha}.$
Nevertheless, what we will show is that eventually solutions corresponding to reasonable initial data start to obey one of the moduli of continuity
given by \eqref{modcon1}, and thus, due to Proposition~\ref{burreg56}, become smooth.
\begin{proof}
We need to prove that the right hand side of \eqref{keyineq} does not exceed zero for $\omega$ given by \eqref{modcon1}, under assumption that $H \leq C_1 \delta^{1-2\alpha}.$
If $\xi > \delta,$ the result is immediate since $\omega'(\xi)=0.$ If $0<\xi \leq \delta,$ the nonlinear term is equal to
\[ \omega(\xi)\omega'(\xi) = \beta H^2 \delta^{-2\beta} \xi^{2\beta-1}. \]
For the dissipation term, we will use just the first part
\begin{equation}\label{dt12c}  c_\alpha \int\limits_0^{\xi/2}
\frac{\omega(\xi+2\eta)+\omega(\xi-2\eta)-2\omega(\xi)}{\eta^{1+2\alpha}}\,d\eta. \end{equation}
Observe that
\[ \omega(\xi+2\eta)+\omega(\xi-2\eta)-2\omega(\xi) = \int_{-2\eta}^{2\eta} (2\eta - |z|) \omega''(\xi+z)\,dz \leq C\omega''(\xi)\eta^2, \]
since $\omega''(\xi)$ is negative and monotone increasing.
Then the expression in \eqref{dt12c} does not exceed
\[ C(\alpha) \omega''(\xi) \xi^{2-2\alpha} = -C(\alpha,\beta)H \delta^{-\beta} \xi^{\beta-2\alpha}.\]
Therefore to have preservation of $\omega,$ we need
\[ \beta H^2 \delta^{-2\beta} \xi^{2\beta-1} < C(\alpha,\beta)H \delta^{-\beta} \xi^{\beta-2\alpha}, \]
which reduces to
\begin{equation}\label{finbal11} H \delta^{-\beta} < C(\alpha,\beta)\xi^{1-\beta-2\alpha}. \end{equation}
Since by assumption $\beta+2\alpha>1,$ the inequality \eqref{finbal11} holds if $H \leq C_1 \delta^{1-2\alpha}$
with appropriately chosen $C_1$ which depends only on $\alpha$ and $\beta.$
\end{proof}

Given $\omega$ defined by \eqref{modcon1}, consider the following derivative family
\begin{equation}\label{omxi0}
\omega(\xi,\xi_0) = \left\{ \begin{array}{ll} \beta H \delta^{-\beta} \xi_0^{\beta-1}\xi + (1-\beta) H \delta^{-\beta} \xi_0^{\beta}, & 0<\xi < \xi_0 \\
H (\xi/\delta)^\beta, & \xi_0 \leq \xi \leq \delta \\
H, & \xi>\delta \end{array} \right.
\end{equation}
where $0 \leq \xi_0<\delta.$ Observe that $\omega(\xi,0)$ coincides with $\omega(\xi)$ given by \eqref{modcon1}. The modulus of continuity $
\omega(\xi,\xi_0)$ is obtained by taking a tangent line to
$\omega(\xi)$ at $\xi = \xi_0$ and replacing $\omega(\xi)$
with this tangent line for $0<\xi<\xi_0.$ Clearly, $\omega(\xi,\xi_0)$ does not tend to zero as $\xi \rightarrow 0.$ Also, it is clear that any bounded initial
data $\theta_0$ obeys $\omega(\xi, \delta)$ provided that $2\|\theta_0\|_{L^\infty} \leq \omega(0,\delta)=(1-\beta)H.$
Thus to prove Theorem~\ref{mainbur}, it suffices to prove the following lemma.
\begin{lemma}\label{finlembur}
Assume that the initial data $\theta_0(x)$ for \eqref{fracbur} obeys $\omega(\xi, \delta).$
Then there exist positive constants $C_{1,2}=C_{1,2}(\alpha,\beta)$ such that if $\xi_0(t)$ is a solution of
\begin{equation}\label{xi0eq}
\frac{d\xi_0}{dt} = -C_2\xi_0^{1-2\alpha}, \,\,\,\xi_0(0)=\delta,
\end{equation}
and $H \leq C_1 \delta^{1-2\alpha},$ then the solution $\theta(x,t)$ obeys $\omega(\xi,\xi_0(t))$ for all $t$ for which $\xi_0(t) \geq 0.$
\end{lemma}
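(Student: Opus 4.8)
The plan is to apply Theorem~\ref{mainthm} to the time-dependent modulus $\omega(\xi,t):=\omega(\xi,\xi_0(t))$ defined by \eqref{omxi0}, with $\xi_0(t)$ the solution of the ODE \eqref{xi0eq}. For the Burgers equation $u=\theta$, so when $\theta$ obeys $\omega$ one has $|u(x+\xi e)-u(x)|=|\theta(x+\xi e)-\theta(x)|\le\omega(\xi)$, i.e. one may take $\Omega(\xi,t)=\omega(\xi,t)$ in \eqref{ucon12}. First I would check that $\omega(\xi,\xi_0(t))$ meets the hypotheses of Theorem~\ref{mainthm}: it is concave in $\xi$ (the tangent line at $\xi_0$ joins the concave power $H(\xi/\delta)^\beta$ with matching slope, and the slope only decreases across $\xi_0$ and across $\delta$), piecewise $C^2$, and satisfies $\omega(0+,t)=(1-\beta)H\delta^{-\beta}\xi_0(t)^\beta>0$ while $\xi_0(t)>0$; continuity in $t$ of $\omega(0+,t)$ and $\partial_\xi\omega(0+,t)$ follows from continuity of $\xi_0(t)$. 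Since $\theta_0$ obeys $\omega(\xi,\xi_0(0))=\omega(\xi,\delta)$ by hypothesis, it then suffices to verify \eqref{keyineq} for all $\xi>0$ and all $t$ with $\xi_0(t)\ge0$. Concavity makes the $\epsilon$-term nonpositive (as noted after Proposition~\ref{termsests}), so I would discard it and prove $\partial_t\omega>\omega\,\partial_\xi\omega+D_\alpha$.

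I would organize the verification by the three branches in \eqref{omxi0}. For $\xi>\delta$ the modulus is the constant $H$, so $\partial_t\omega=\partial_\xi\omega=0$ and $D_\alpha<0$, and the inequality is immediate. For $\xi_0<\xi<\delta$ the profile coincides with the stationary $H(\xi/\delta)^\beta$, which is independent of $\xi_0$, so $\partial_t\omega=0$ and I must reproduce the balance of Proposition~\ref{statmod}: the nonlinear term $\beta H^2\delta^{-2\beta}\xi^{2\beta-1}$ is dominated by $|D_\alpha|$. The only new feature is that below $\xi_0$ the profile is the tangent line rather than the power, which lies above it and hence slightly weakens the dissipation. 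Because the stationary balance \eqref{finbal11} has strict slack for $\xi<\delta$ (the exponent $1-\beta-2\alpha$ is negative, so the worst case is $\xi=\delta$), this loss is absorbed, provided the extra positive contribution is estimated using that the gap $L(s)-H(s/\delta)^\beta$ is $O(\omega''(\xi_0)(\xi_0-s)^2)$ near $s=\xi_0$.

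The heart of the matter is $0<\xi<\xi_0$, where the profile is linear and $D_\alpha$ must beat both the nonlinear term and the negative time derivative. Differentiating \eqref{omxi0} and using \eqref{xi0eq}, the chain rule gives $\partial_t\omega=\beta(1-\beta)H\delta^{-\beta}\xi_0^{\beta-2}(\xi_0-\xi)\,\dot\xi_0=-C_2\beta(1-\beta)H\delta^{-\beta}\xi_0^{\beta-1-2\alpha}(\xi_0-\xi)$, which is negative and bounded in magnitude by $C_2\beta(1-\beta)H\delta^{-\beta}\xi_0^{\beta-2\alpha}$. The nonlinear term is at most $\beta H^2\delta^{-2\beta}\xi_0^{2\beta-1}$, since $\partial_\xi\omega=\beta H\delta^{-\beta}\xi_0^{\beta-1}$ and $\omega\le H\delta^{-\beta}\xi_0^{\beta}$ here. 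I would then establish a dissipation lower bound $-D_\alpha(\xi)\ge c\,H\delta^{-\beta}\xi_0^{\beta-2\alpha}$ on all of $(0,\xi_0)$ (the curvature feeding $D_\alpha$ is supplied by the power part at distance $\gtrsim\xi_0-\xi$, which fixes this scale). Collecting terms, \eqref{keyineq} reduces to $H\delta^{-\beta}\xi_0^{\beta-2\alpha}\left(c-C_2\beta(1-\beta)\tfrac{\xi_0-\xi}{\xi_0}\right)>\beta H^2\delta^{-2\beta}\xi_0^{2\beta-1}$; choosing $C_2$ so small that $C_2\beta(1-\beta)\le c/2$ and dividing, the requirement becomes $\beta H\delta^{-\beta}\xi_0^{\beta-1+2\alpha}<c/2$. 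Since $\beta+2\alpha>1$ the exponent is positive, so $\xi_0\le\delta$ forces $H\delta^{-\beta}\xi_0^{\beta-1+2\alpha}\le H\delta^{2\alpha-1}$, and the inequality holds as soon as $H\le C_1\delta^{1-2\alpha}$ with $C_1=c/(2\beta)$.

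The main obstacle I expect is exactly the dissipation lower bound on $(0,\xi_0)$ together with the junction estimate just above $\xi_0$: there the replacement of the power by the tangent line below $\xi_0$ reduces $|D_\alpha|$, and one must show the reduction is genuinely of $O(\omega''(\xi_0)(\xi-\xi_0)^2)$ type, swallowed by the available slack, rather than the crude bound that blows up like $(\xi-\xi_0)^{-2\alpha}$. The remaining, purely bookkeeping, difficulty is to select a single pair $C_1,C_2$ closing all three regions simultaneously; the computation above shows both can be fixed in terms of $\alpha,\beta$ and the dissipation constant $c$ alone.
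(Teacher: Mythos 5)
Your overall route is the paper's: pass to Theorem~\ref{mainthm} with $\Omega(\xi,t)=\omega(\xi,t)$ (legitimate for Burgers since $u=\theta$), verify \eqref{keyineq} on the three ranges $\xi>\delta$, $\xi_0<\xi\le\delta$, $0<\xi\le\xi_0$; your formulas for $\partial_t\omega$ and the nonlinear term, and your final selection of $C_1,C_2$, all match the paper's. But there is a genuine gap at the single most important step: the dissipation lower bound on $(0,\xi_0)$. Your claimed bound $-D_\alpha(\xi)\ge c\,H\delta^{-\beta}\xi_0^{\beta-2\alpha}$ is true, but the mechanism you offer for it --- ``curvature supplied by the power part at distance $\gtrsim\xi_0-\xi$'' --- is wrong, and an argument built on it fails exactly where the bound is most needed, as $\xi\to0$. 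Indeed, for $\xi<\xi_0/2$ the local integral in \eqref{dtermest} vanishes identically ($\omega(\cdot,\xi_0)$ is affine on $(0,2\xi)$), and in the tail integral the numerator $\omega(\xi+2\eta)-\omega(2\eta-\xi)-2\omega(\xi)$ equals $-2\omega(0+)$ plus a curvature correction that vanishes for $\eta\le(\xi_0-\xi)/2$ and is at most $2\xi\,\partial_\xi\omega(0+,\cdot)$ in magnitude otherwise; so curvature alone contributes only $O\bigl(\xi\,H\delta^{-\beta}\xi_0^{\beta-1-2\alpha}\bigr)\to0$, whereas the nonlinear term tends to the positive constant $\beta(1-\beta)H^2\delta^{-2\beta}\xi_0^{2\beta-1}$ as $\xi\to0$: the inequality cannot close. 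The missing idea --- the very reason the family \eqref{omxi0} is built with a jump at zero --- is that the jump itself produces dissipation: since $\omega(\cdot,\xi_0)-\omega(0+,\xi_0)$ is concave and vanishes at $0+$, one has $\omega(\xi+2\eta)-\omega(2\eta-\xi)-2\omega(\xi)\le-2\omega(0+)$ for all $\eta\ge\xi/2$, hence (this is \eqref{dissip56}) $-D_\alpha(\xi)\ge C(\alpha)\,\omega(0+)\,\xi^{-2\alpha}=C(\alpha)(1-\beta)H\delta^{-\beta}\xi_0^{\beta}\xi^{-2\alpha}$, which dominates your target bound on all of $(0,\xi_0]$ and in fact blows up as $\xi\to0$. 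With this substitution, your third-paragraph bookkeeping becomes exactly the paper's proof, which splits $D_\alpha$ into two halves, one against the nonlinearity (yielding $H\le C_1\delta^{1-2\alpha}$) and one against $\partial_t\omega$ (yielding the smallness of $C_2$).

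By contrast, the ``main obstacle'' you anticipate in the middle range $\xi_0<\xi\le\delta$ is not a real one, and no quadratic gap estimate near the junction is needed. Writing $\omega(\xi+2\eta)+\omega(\xi-2\eta)-2\omega(\xi)=\int_{-2\eta}^{2\eta}(2\eta-|z|)\,\omega''(\xi+z)\,dz$ and using $\omega''\le0$ everywhere, one may simply discard the (zero) curvature below $\xi_0$ and keep the curvature mass on $(\xi/2,3\xi/2)$, each unit of which is weighted by at least $c(\alpha)\xi^{1-2\alpha}$. Since the tangent-line modification matches the slope of the power at $\xi_0$ and slopes are nonincreasing, the slope drop across $(\xi/2,3\xi/2)$ is at least $\beta\bigl(1-(3/2)^{\beta-1}\bigr)H\delta^{-\beta}\xi^{\beta-1}$ uniformly in $\xi_0<\xi\le\delta$, giving $-D_\alpha(\xi)\ge C(\alpha,\beta)H\delta^{-\beta}\xi^{\beta-2\alpha}$ with no degeneration of the form $(\xi-\xi_0)^{-2\alpha}$; the balance then closes exactly as in Proposition~\ref{statmod}. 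Your verification of the hypotheses of Theorem~\ref{mainthm} (concavity of $\omega(\cdot,\xi_0(t))$, $\omega(0+,t)>0$ while $\xi_0(t)>0$, continuity in $t$, discarding the $\epsilon$-term by concavity) is correct and matches the paper's implicit use of the criterion.
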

%\it Remark. \rm %1. The constant $C_1$ in Lemma~\ref{finlembur} may be different from that of Proposition~\ref{statmod}; of course, the smaller one will work
%for both statements. \\
 Theorem~\ref{mainbur} follows from Lemma~\ref{finlembur}, since the solution $\xi_0(t)$ of equation \eqref{xi0eq} reaches zero in finite time $T$.
Therefore, for all $t>T,$ the solution $\theta(x,t)$ obeys $\omega(\xi),$ and thus its $C^\beta$ norm is uniformly bounded.
%Assume that Lemma~\ref{finlembur} is true. Observe that the solution $\xi_0(t)$ of the equation \eqref{xi0eq} reaches value zero in finite time.
\begin{proof}
We will check that $\omega(\xi, \xi_0(t))$ satisfies \eqref{keyineq}. We have several ranges to consider. The case $\xi > \delta$ is immediate.
Consider $0<\xi\leq \xi_0(t).$ Then the time derivative is equal to
\begin{equation}\label{timeder}
\partial_t \omega(\xi,\xi_0(t)) = \partial_{\xi_0} \omega(\xi,\xi_0(t))\xi_0'(t) = \beta(1-\beta)\frac{H}{\delta^\beta}(\xi_0^{\beta-1}-\xi_0^{\beta-2}\xi)\xi_0'.
\end{equation}
The nonlinearity is equal to
\begin{equation}\label{nonlin11}
\omega(\xi,\xi_0(t))\partial_\xi \omega(\xi,\xi_0(t)) = \beta\frac{H^2}{\delta^{2\beta}}\xi_0^{2\beta-2}(\beta \xi +(1-\beta)\xi_0).
\end{equation}
Finally, in the dissipation term $D_\alpha(\xi,t)$ we will use just one summand:
\begin{equation}\label{dissip56}
D_{\alpha}(\xi,t) \leq c_\alpha \int\limits_{\xi/2}^\infty
\frac{\omega(\xi+2\eta)-\omega(2\eta-\xi)-2\omega(\xi)}{\eta^{1+2\alpha}}\,d\eta \leq
-C(\alpha) \frac{\omega(0)}{\xi^{2\alpha}}.
\end{equation}
%Here $c_\alpha$ denotes a positive constant that may change within the formula but depends only on $\alpha.$
In the last step we used the fact that
\[ \omega(\xi+2\eta)-\omega(2\eta-\xi)-2\omega(\xi) \leq -2\omega(0) \]
due to concavity of $\omega(\xi)-\omega(0).$
%Let us first compare nonlinear and dissipative contributions.
First, we claim that one can choose $C_1$ so that
\begin{equation}\label{eq123} \omega(\xi,\xi_0(t))\partial_\xi \omega(\xi,\xi_0(t)) +\frac12 D_{\alpha}(\xi,t) <0.\end{equation}
Indeed from \eqref{nonlin11} and \eqref{dissip56} we find that the sum in \eqref{eq123} does not exceed
%\begin{eqnarray*}
\[\beta\frac{H^2}{\delta^{2\beta}}\xi_0^{2\beta-2}(\beta \xi +(1-\beta)\xi_0) - \frac12 c_\alpha \frac{\omega(0)}{\xi^{2\alpha}} \leq \\
\beta \frac{H \xi_0^\beta}{\delta^\beta} \left( \frac{H}{\delta^{1-2\alpha}}\left(\frac{\xi_0}{\delta}\right)^{\beta-1+2\alpha}\left(\frac{\xi}{\xi_0}\right)^{2\alpha}
-C(\alpha,\beta)\right),\]
%\end{eqnarray*}
where $C(\alpha,\beta)$ is an explicit positive constant. Clearly, if $C_1$ is sufficiently small, the expression in the brackets is negative.

Next, we compare the time derivative and dissipative term. We will show that the constant $C_2$ in \eqref{xi0eq} can be chosen sufficiently small so that
\begin{equation}\label{eq124} -\partial_t \omega(\xi,\xi_0(t)) +\frac12 D_{\alpha}(\xi,t) <0;\end{equation}
this will complete consideration of the $0<\xi \leq \xi_0(t)$ range.
Due to \eqref{timeder}, \eqref{dissip56} and \eqref{omxi0}, we find that the expression in \eqref{eq124} does not exceed
\[ -\beta(1-\beta)\frac{H}{\delta^\beta}\xi_0^{\beta-1}\xi_0' - \frac{C(\alpha)(1-\beta)}{2} \frac{H\xi_0^\beta}{\delta^\beta \xi^{2\alpha}} = \\
(1-\beta)\frac{H}{\delta^\beta}\xi_0^{\beta-2\alpha}\left( C_2 \beta - \frac{C(\alpha)}{2} \left(\frac{\xi_0}{\xi}\right)^{2\alpha}\right). \]
Clearly, the last expression is negative if $C_2$ is sufficiently small.

Finally, let us consider the range $\xi_0(t)<\xi \leq \delta.$ Here $\partial_t \omega(\xi, \xi_0(t))=0,$ and it suffices to show that
\begin{equation}\label{eqn111}
\omega(\xi,\xi_0(t))\partial_\xi \omega(\xi,\xi_0(t)) + D_\alpha(\xi,t) <0.
\end{equation}
The nonlinear term $\omega \partial_\xi \omega$ is equal to $\beta H^2 \delta^{-2\beta} \xi^{2\beta-1}.$ For the dissipative term, we find
\begin{eqnarray*}
D_\alpha(\xi,t) \leq c_\alpha \int_0^{\xi/2} \frac{\omega(\xi+2\eta)+\omega(\xi-2\eta)-2\omega(\xi)}{\eta^{1+2\alpha}}\,d\eta \leq c_\alpha
\int_{-\xi}^\xi \omega''(\xi+s) \int_{|s|/2}^{\xi/2} \frac{2\eta -|s|}{\eta^{1+2\alpha}}\,d\eta ds \leq \\
\frac13 \int_{-\xi/2}^{\xi/2} \omega''(\xi+s) \int_{\xi/3}^{\xi/2} \frac{d\eta}{\eta^{2\alpha}}\,ds \leq -C(\alpha,\beta) \xi^{\beta-2\alpha}\frac{H}{\delta^\beta}.
\end{eqnarray*}
%Here we set $\omega''(\xi+s)=0$ if $\xi+s \leq \xi_0(t)$ (even though $\omega''(\xi)$ has a negative $\delta$-function contribution at $\xi=\xi_0(t)$).
Combining these estimates, we see that the difference in \eqref{eqn111} does not exceed
\[ \beta\frac{H}{\delta^\beta}\xi^{\beta-2\alpha} \left( \frac{H}{\delta^{1-2\alpha}}\left(\frac{\xi}{\delta}\right)^{\beta-1+2\alpha} - C(\alpha,\beta)\right). \]
Since $\beta+2\alpha>1,$ the last expression is negative provided that the constant $C_1$ in Lemma~\ref{finlembur} was chosen sufficiently small.
\end{proof}

\section{Eventual Regularization in the Supercritical Case: the Modified SQG equation}\label{modsqg}

In this section, we will consider the modified SQG equation
\begin{equation}\label{modsqg1}
\theta_t = (u \cdot \nabla)\theta -(-\Delta)^\alpha \theta + \epsilon \Delta \theta,\,\,\,\theta(x,0)=\theta_0(x),
\end{equation}
where $u = \nabla^\perp (-\Delta)^{-\gamma} \theta,$ $x \in \Tm^2,$ $1/2 < \gamma <1,$ $\epsilon>0.$
We postpone the more difficult SQG case ($\gamma=1/2$) to the next section. The supercritical range of interest to
us corresponds to $0 < \alpha + \gamma < 1.$ Similarly to the Burgers case, uniform in time control of a sufficiently strong
$C^\beta$ norm gives global regularity of solution.
\begin{proposition}\label{conregmsqg} Suppose $\alpha +\gamma <1,$ $1/2 \leq \gamma < 1,$ $\alpha>0.$
Assume that a solution $\theta(x,t)$ of \eqref{modsqg1} satisfies for all $t \geq T$ the bound $\|\theta(\cdot,t)\|_{C^\beta} \leq B,$
for $\beta > 2-2\gamma-2\alpha.$ Then for every $s>0$ and $t>T$ we also have $\|\theta(\cdot,t)\|_{H^s} \leq f(B,s,t)<\infty.$
The function $f(B,s,t)$ does not depend on the value of $\epsilon$ in \eqref{modsqg1}.
\end{proposition}

This result can be proved in a straightforward way using ideas of \cite{CW} or \cite{CCW}. The existence of local solution, smooth for $t>0,$
for initial data in $H^{1+2\gamma-2\alpha}$ can be proven similarly to results of \cite{KNS} for the Burgers equation.
%We sketch a proof of this Proposition in the appendix.
Hence, Theorem~\ref{secsqg} will follow from Theorem~\ref{mainsqg}.

The proof of Theorem~\ref{mainsqg} is quite similar to the proof of Theorem~\ref{mainbur} given a couple of auxiliary statements.
First, we need a lemma that provides control of $u$ given control of $\theta.$
\begin{lemma}\label{msqgucon}
Assume that $\theta$ obeys modulus of continuity $\omega.$ Then $u  = \nabla^\perp (-\Delta)^{-\gamma} \theta,$ $1/2<\gamma<1,$ obeys modulus of continuity
\begin{equation}\label{modconumsqg}
\Omega(\xi) = A \left( \int_0^\xi \frac{\omega(\eta)}{\eta^{2-2\gamma}}\,d\eta + \xi \int_\xi^\infty \frac{\omega(\eta)}{\eta^{3-2\gamma}}\,d\eta \right)
\end{equation}
\end{lemma}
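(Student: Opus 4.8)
The plan is to realize $u=\nabla^\perp(-\Delta)^{-\gamma}\theta$ as a convolution against a kernel $K$ and to estimate the increment $u(x)-u(x')$ directly, splitting the integral into a near field and a far field relative to the separation $\xi=|x-x'|$. Proving $|u(x)-u(x')|\le\Omega(\xi)$ for arbitrary $x\neq x'$ gives that $u$ obeys $\Omega$, and in particular the directional bound \eqref{ucon12}.

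First I would record the properties of the kernel. On $\Tm^2$ the operator $(-\Delta)^{-\gamma}$ acts on mean-zero functions with a kernel $G$ equal to the Riesz kernel $c_\gamma|z|^{2\gamma-2}$ plus a smooth periodic correction; since $u=\nabla^\perp(-\Delta)^{-\gamma}\theta$, we may write $u(x)=\int_{\Tm^2}K(x-y)\theta(y)\,dy$ with $K=\nabla^\perp G$. Two facts are needed: (i) $\int_{\Tm^2}K=0$, because $K$ is a (perpendicular) gradient of a periodic function and the torus has no boundary; and (ii) the pointwise bounds $|K(z)|\le C|z|^{2\gamma-3}$ and $|\nabla K(z)|\le C|z|^{2\gamma-4}$ for small $|z|$ (with everything bounded away from the origin), inherited from differentiating $|z|^{2\gamma-2}$. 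The exponent $2\gamma-3\in(-2,-1)$ makes $K$ locally integrable precisely because $\gamma>1/2$; this is the structural reason the present modified SQG case is easier than the SQG case $\gamma=1/2$ postponed to the next section.

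Using (i) to subtract the constant $\theta(x)$, I would write
$$u(x)-u(x')=\int_{\Tm^2}\bigl[K(x-y)-K(x'-y)\bigr]\bigl[\theta(y)-\theta(x)\bigr]\,dy,$$
and bound $|\theta(y)-\theta(x)|\le\omega(|y-x|)$ throughout. In the near field $|y-x|\le 2\xi$ I would discard the cancellation and estimate each kernel separately via (ii): the piece singular at $x$ gives $\int_0^{2\xi}\eta^{2\gamma-3}\omega(\eta)\,\eta\,d\eta=\int_0^{2\xi}\omega(\eta)\eta^{2\gamma-2}\,d\eta$, matching the first term of $\Omega$, while the piece singular at $x'$ (the ``wrong'' point) is controlled crudely by $\omega(2\xi)\,\xi^{2\gamma-1}$ after the substitution centered at $x'$ and using $\int_0^{C\xi}\eta^{2\gamma-2}\,d\eta\lesssim\xi^{2\gamma-1}$ (finite since $\gamma>1/2$). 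Concavity of $\omega$ then gives $\omega(\eta)\ge(\eta/\xi)\omega(\xi)$ on $(0,\xi]$ (valid since $\omega(0+)\ge0$), hence $\omega(\xi)\xi^{2\gamma-1}\le 2\gamma\int_0^\xi\omega(\eta)\eta^{2\gamma-2}\,d\eta$, absorbing this piece into the same term; the doubling bound $\omega(2\xi)\le 2\omega(\xi)$ (also from concavity) handles the constants. In the far field $|y-x|>2\xi$ I would exploit the cancellation through the mean value theorem: for $z$ on the segment $[x,x']$ one has $|z-y|\ge|y-x|/2$, so $|K(x-y)-K(x'-y)|\le\xi\sup_z|\nabla K(z-y)|\le C\xi|y-x|^{2\gamma-4}$, yielding $\xi\int_{2\xi}^\infty\omega(\eta)\eta^{2\gamma-3}\,d\eta$, the second term of $\Omega$; this converges because $2\gamma-3<-1$ and $\omega$ is bounded. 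Replacing the limits $2\xi$ by $\xi$ costs only a factor absorbed into $A$, since the excess $\int_\xi^{2\xi}\omega(\eta)\eta^{2\gamma-2}\,d\eta\lesssim\omega(\xi)\xi^{2\gamma-1}$ is again dominated by the first term.

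I expect the main obstacle to be the near-field contribution of the kernel singular at $x'$ rather than at $x$: unlike the usual increment estimate, the reference point for $\omega$ and the singularity of one kernel factor do not coincide, and it is exactly the concavity bound $\omega(\xi)\xi^{2\gamma-1}\lesssim\int_0^\xi\omega(\eta)\eta^{2\gamma-2}\,d\eta$ (equivalently, the local integrability of $K$, which fails at $\gamma=1/2$) that rescues the estimate. A secondary technical point is justifying the kernel bounds and the far-field integrability on the torus, where the nominal $\int^\infty$ is in fact an integral over a bounded fundamental domain and the smooth periodic correction to $G$ contributes only bounded, lower-order terms; reducing to small $\xi$ (the bound being trivial for $\xi\sim\mathrm{diam}\,\Tm^2$ since $u$ is bounded) keeps the segment $[x,x']$ away from the periodic images of the singularity.
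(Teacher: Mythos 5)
Your proof is correct and takes essentially the same route as the paper's: the paper simply cites the analogous lemma from Section 2 of \cite{KNV} (noting the kernel of $(-\Delta)^{-\gamma}$ is a constant multiple of $|z|^{2\gamma-2}$, so the velocity kernel scales like $|z|^{2\gamma-3}$), and that cited argument is precisely your near-field/far-field decomposition of $u(x)-u(x')$ with the mean value theorem in the far field. You also correctly isolate the one point the citation glosses over but that matters for the later application to $\omega(\xi,\xi_0)$ with $\omega(0+)>0$: local integrability of the kernel for $\gamma>1/2$ replaces the cancellation needed at $\gamma=1/2$, which is exactly why the paper must treat the SQG case separately in Section 5.
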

\begin{proof}
The proof of this lemma is identical to the proof of a similar Lemma in Section 2 of \cite{KNV}. We only need to notice that the Fourier transform
of $|k|^{-2\gamma}$ is equal to $c_\gamma |x|^{-3+2\gamma}.$
\end{proof}

To control solutions of \eqref{modsqg1}, we will use the same family of moduli of continuity \eqref{omxi0} as for the Burgers equation.
Given the next Lemma, the rest of the proof becomes parallel to the Burgers case.
\begin{lemma}\label{Omestmsqg}
Assume that $1/2 < \gamma <1,$ $0<\beta < 2-2\gamma.$ Then $\Omega(\xi,\xi_0)$ given by \eqref{modconumsqg},
corresponding to $\omega(\xi,\xi_0)$ given by \eqref{omxi0}, satisfies
\begin{equation}\label{ourOmb}
 \Omega(\xi,\xi_0) \leq C \xi^{2\gamma-1}\omega(\xi,\xi_0),
\end{equation}
for all $0 < \xi \leq \delta.$ This holds for all choices of parameters $H>0,\delta>0,$ $0 \leq \xi_0 \leq \delta$ in \eqref{omxi0}. The constant $C$ in \eqref{ourOmb}
depends only on $\gamma$ and $\beta.$
\end{lemma}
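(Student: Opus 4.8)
The plan is to estimate directly the two integrals defining $\Omega(\xi,\xi_0)$ in \eqref{modconumsqg} and compare the result against the target $C\xi^{2\gamma-1}\omega(\xi,\xi_0)$, exploiting the explicit piecewise-power form of $\omega(\xi,\xi_0)$ in \eqref{omxi0}. The essential simplification is that for $0<\xi\le\delta$ we have $\omega(\xi,\xi_0)\ge \frac12(1-\beta)H\delta^{-\beta}\xi_0^\beta$ (the value at the origin times a harmless constant), and in fact $\omega(\xi,\xi_0)$ is comparable to $\omega(\xi)=H(\xi/\delta)^\beta$ up to the constant $(1-\beta)$ on the whole range, so it will suffice to prove the bound with $\omega$ replaced by $\omega(\xi)=H(\xi/\delta)^\beta$ and then restore the general $\xi_0$ by monotonicity. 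I would first record this reduction explicitly, so that only the single power-law modulus needs to be handled.

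Next I would split the first integral $\int_0^\xi \omega(\eta)\eta^{2\gamma-2}\,d\eta$ according to whether $\eta$ lies below or above $\xi_0$; on each subinterval the integrand is an explicit power of $\eta$ (namely $\eta^{2\gamma-1}$ from the linear part and $\eta^{\beta+2\gamma-2}$ from the power part). Because $\beta<2-2\gamma$ forces the exponent $\beta+2\gamma-2<0$ to be strictly greater than $-1$ (as $\beta>0$), all these integrals converge at $0$ and evaluate to multiples of $\xi^{2\gamma-1}\omega(\xi)$, with the dominant contribution coming from the neighborhood of $\eta=\xi$. The second integral $\xi\int_\xi^\delta \omega(\eta)\eta^{2\gamma-3}\,d\eta + \xi\int_\delta^\infty H\eta^{2\gamma-3}\,d\eta$ is handled the same way: the exponent $\beta+2\gamma-3<-1$ on $[\xi,\delta]$ guarantees convergence at the upper end and that the integral is dominated by its lower endpoint $\eta=\xi$, again producing $\xi^{2\gamma-1}\omega(\xi)$, while the tail integral over $[\delta,\infty)$ converges since $2\gamma-3<-1$ and contributes a term controlled by $H\xi\delta^{2\gamma-2}\le H\xi^{2\gamma-1}(\xi/\delta)^{\,?}$, which I would bound against $\xi^{2\gamma-1}\omega(\xi)$ using $\xi\le\delta$ and $\beta<2-2\gamma$.

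The one point demanding care — the main obstacle — is keeping track of the signs of the exponents and ensuring every integral actually converges and is dominated by the boundary value near $\eta=\xi$; this is exactly where the hypothesis $0<\beta<2-2\gamma$ is used, and it is the inequality $\beta+2\gamma-2>-1$ (convergence at $0$ of the first integral) together with $\beta+2\gamma-3<-1$ and $2\gamma-3<-1$ (convergence at $\infty$ of the second) that must be verified so that all constants depend only on $\gamma$ and $\beta$. Once the estimates are assembled, summing the pieces gives $\Omega(\xi,\xi_0)\le C\,\xi^{2\gamma-1}\,\omega(\xi)\le C'\,\xi^{2\gamma-1}\,\omega(\xi,\xi_0)$ for all $0<\xi\le\delta$, uniformly in $H,\delta$ by scaling and in $\xi_0$ by the comparison from the first step, which is the claimed \eqref{ourOmb}.
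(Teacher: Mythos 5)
Your power counting is sound for the pure power modulus ($\xi_0=0$), but the reduction you lean on in the first step is false, and it fails at exactly the point of the lemma. For $0<\xi<\xi_0$ the modulus \eqref{omxi0} satisfies $\omega(\xi,\xi_0)\ge\omega(0+,\xi_0)=(1-\beta)H\delta^{-\beta}\xi_0^{\beta}>0$, while $H(\xi/\delta)^\beta\to0$ as $\xi\to0$; hence the ratio $\omega(\xi,\xi_0)/\bigl(H(\xi/\delta)^\beta\bigr)$ is unbounded and the two functions are \emph{not} comparable ``on the whole range'' by any constant depending only on $\beta$. Monotonicity cannot restore the general case either: since $\omega(\eta,\xi_0)\ge\omega(\eta,0)$ pointwise (the tangent line lies above the concave power), the functional \eqref{modconumsqg} gives $\Omega(\xi,\xi_0)\ge\Omega(\xi,0)$ --- the wrong direction for deducing \eqref{ourOmb} for general $\xi_0$ from the $\xi_0=0$ case, which is what your final sentence invokes. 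Relatedly, your exponent bookkeeping on $(0,\xi_0)$ lists only $\eta^{2\gamma-1}$ ``from the linear part'' and omits the contribution of the constant term $(1-\beta)H\delta^{-\beta}\xi_0^{\beta}$ of the affine piece, which produces an integrand $\sim\eta^{2\gamma-2}$ near the origin; its integrability is precisely where the hypothesis $\gamma>1/2$ enters (and why this lemma fails for SQG, $\gamma=1/2$, as the paper notes at the end of Section 4). Your stated convergence criterion $\beta+2\gamma-2>-1$ covers only the power piece, not the jump at zero, which is the whole reason this family of moduli was introduced.

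The gap is repairable in two ways. The paper's route avoids your reduction entirely: for the first integral, monotonicity of $\omega(\cdot,\xi_0)$ gives directly
\[
\int_0^\xi \frac{\omega(\eta,\xi_0)}{\eta^{2-2\gamma}}\,d\eta \le \omega(\xi,\xi_0)\,\frac{\xi^{2\gamma-1}}{2\gamma-1},
\]
with no splitting needed, and for the second integral (when $\xi\le\xi_0$) one splits at $\xi_0$, bounding $\omega(\eta,\xi_0)\le\omega(\xi_0,\xi_0)$ on $[\xi,\xi_0]$ and $\omega(\eta,\xi_0)\le H(\eta/\delta)^\beta$ on $[\xi_0,\infty)$, then converting back via the one-sided comparison $\omega(\xi_0,\xi_0)\le\frac{1}{1-\beta}\,\omega(\xi,\xi_0)$, which is the correct replacement for your two-sided claim. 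Alternatively, you can salvage your reduction by the pointwise bound $\omega(\eta,\xi_0)\le H(\eta/\delta)^\beta+(1-\beta)H\delta^{-\beta}\xi_0^\beta$ (valid since $\beta\xi_0^{\beta-1}\eta\le\eta^\beta$ for $\eta\le\xi_0$), treating the constant summand separately: it contributes $C\,\omega(0+,\xi_0)\,\xi^{2\gamma-1}\le C\,\omega(\xi,\xi_0)\,\xi^{2\gamma-1}$ to $\Omega,$ again using $2\gamma-2>-1$ for convergence at the origin. With either fix, your remaining endpoint-domination estimates (using $\beta<2-2\gamma$ on $[\xi,\delta]$ and $2\gamma-3<-1$ for the tail) go through as written, with constants depending only on $\gamma$ and $\beta$.
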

\begin{proof}
The first summand in \eqref{modconumsqg} is easy to estimate:
\[ \int_0^\xi \frac{\omega(\eta,\xi_0)}{\eta^{2-2\gamma}}\,d\eta \leq \omega(\xi,\xi_0) \int_0^\xi \frac{1}{\eta^{2-2\gamma}}\,d\eta \leq  \frac{1}{2\gamma -1}
\omega(\xi,\xi_0) \xi^{2\gamma-1}. \]
Consider now the second summand and suppose first that $0<\xi \leq \xi_0.$ Splitting integration into two regions, we estimate
\[ \xi \int_\xi^{\xi_0} \frac{\omega(\eta,\xi_0)}{\eta^{3-2\gamma}}\,d\eta \leq \frac{1}{2-2\gamma}\omega(\xi_0,\xi_0)\xi^{2\gamma-1} \leq  \frac{1}{(2-2\gamma)(1-\beta)}\omega(\xi,\xi_0)\xi^{2\gamma-1}. \]
%The estimate in the last step follows from \[ \omega (\xi,\xi_0) > \omega(0+,\xi_0) = (1-\beta)^{-1} \omega(\xi_0,\xi_0). \]
Next,
\begin{eqnarray*} \xi\int_{\xi_0}^\infty  \frac{\omega(\eta,\xi_0)}{\eta^{3-2\gamma}}\,d\eta  \leq  \xi\int_{\xi_0}^\infty  \frac{H}{\delta^\beta \eta^{3-2\gamma-\beta}}\,d\eta
= \frac{H}{(2-2\gamma-\beta)\delta^\beta} \xi \xi_0^{\beta+2\gamma-2} \\ \leq  \frac{1}{2-2\gamma-\beta}\xi^{2\gamma-1} \omega(\xi_0,\xi_0)\leq C(\beta,\gamma)\xi^{2\gamma-1} \omega(\xi,\xi_0).
\end{eqnarray*}
Finally, assume that $\xi_0 < \xi \leq \delta.$ Then
\[ \xi \int_\xi^\infty \frac{\omega(\eta,\xi_0)}{\eta^{3-2\gamma}}\,d\eta = \frac{H \xi}{\delta^\beta} \int_\xi^\infty \frac{1}{\eta^{3-2\gamma-\beta}}\,d\eta =
\frac{1}{2-2\gamma-\beta} \omega(\xi,\xi_0)\xi^{2\gamma-1}. \]
\end{proof}

The following is the analog of the key Lemma~\ref{finlembur}. Theorem~\ref{mainsqg} is an immediate consequence of this Lemma.

\begin{lemma}\label{finlemmsqg}
Suppose that $1/2 \leq \gamma <1,$ $\alpha>0,$ $\alpha + \gamma <1,$ $2-2\gamma-2\alpha  < \beta < 2-2\gamma.$
Assume that the initial data $\theta_0(x)$ for \eqref{modsqg1} obeys $\omega(\xi, \delta).$
There exist positive constants $C_{1,2}=C_{1,2}(\alpha,\beta,\gamma)$ such that if $\xi_0(t)$ is a solution of
\begin{equation}\label{xi0eq1}
\frac{d\xi_0}{dt} = -C_2\xi_0^{1-2\alpha}, \,\,\,\xi_0(0)=\delta,
\end{equation}
and $H \leq C_1 \delta^{2-2\alpha-2\gamma},$ then the solution $\theta(x,t)$ obeys $\omega(\xi,\xi_0(t))$ for all $t$ for which $\xi_0(t) \geq 0.$
\end{lemma}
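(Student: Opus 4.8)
The plan is to verify that the time-dependent modulus $\omega(\xi,\xi_0(t))$ from \eqref{omxi0} satisfies the key inequality \eqref{keyineq} of Theorem~\ref{mainthm}, so that preservation follows exactly as in the Burgers case. Throughout I take $1/2<\gamma<1$, so that Lemmas~\ref{msqgucon} and \ref{Omestmsqg} are available; the endpoint $\gamma=1/2$ is precisely where Lemma~\ref{Omestmsqg} degenerates (the integral $\int_0^\xi \eta^{2\gamma-2}\,d\eta$ diverges logarithmically), and is deferred to Section~\ref{sqg}. The only structural difference from the proof of Lemma~\ref{finlembur} is the nonlinear term: in place of the Burgers identity $\Omega=\omega$ I use the bound $\Omega(\xi,\xi_0)\,\partial_\xi\omega(\xi,\xi_0)\le C\,\xi^{2\gamma-1}\,\omega(\xi,\xi_0)\,\partial_\xi\omega(\xi,\xi_0)$ supplied by Lemma~\ref{Omestmsqg}. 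One first checks that $\omega(\xi,\xi_0(t))$ meets the hypotheses of Theorem~\ref{mainthm}: it is smooth in $t$ through $\xi_0(t)$, a concave increasing modulus for each fixed $t$, and satisfies $\omega(0+,t)=(1-\beta)H\delta^{-\beta}\xi_0(t)^\beta>0$ for $\xi_0(t)>0$, placing us in the first alternative of the theorem.

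I would then split into the same three ranges as in Lemma~\ref{finlembur}. The range $\xi>\delta$ is immediate since $\partial_\xi\omega=\partial_t\omega=0$ there. In the linear range $0<\xi\le\xi_0(t)$ and the power range $\xi_0(t)<\xi\le\delta$ the dissipation lower bounds \eqref{dissip56} and the final display of the Burgers proof carry over verbatim, since they use only concavity of $\omega(\cdot,\xi_0)-\omega(0+,\xi_0)$ and positivity of the jump $\omega(0+,\xi_0)>0$. Likewise the comparison of the time derivative against half the dissipation, \eqref{eq124}, is entirely $\gamma$-independent: it reduces to choosing $C_2$ with $C_2\beta<\tfrac12 C(\alpha)$, exactly as for Burgers, and fixes $C_2$. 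Thus all the work specific to the modified SQG equation sits in the balance between the nonlinear term and half the dissipation.

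For that balance I would insert the factor $\xi^{2\gamma-1}$ into the Burgers computation. In the power range, $\Omega\,\partial_\xi\omega\le C\beta H^2\delta^{-2\beta}\xi^{2\beta+2\gamma-2}$ while $\tfrac12 D_\alpha\le -C(\alpha,\beta)H\delta^{-\beta}\xi^{\beta-2\alpha}$; after dividing by $H\delta^{-\beta}\xi^{\beta-2\alpha}$ the sign is governed by $H\delta^{-\beta}\xi^{\beta+2\alpha+2\gamma-2}$. In the linear range the same bookkeeping, using $\omega(\xi,\xi_0)\le H(\xi_0/\delta)^\beta$ and $\partial_\xi\omega=\beta H\delta^{-\beta}\xi_0^{\beta-1}$, yields the analogous quantity $H\delta^{-\beta}\xi_0^{\beta-1}\xi^{2\gamma-1+2\alpha}$. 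The decisive point is the hypothesis $\beta>2-2\gamma-2\alpha$, i.e. $\beta+2\alpha+2\gamma-2>0$: the relevant exponent of $\xi$ (and of $\xi_0$) is then positive, so each expression is maximized at the top of its range, $\xi=\delta$ or $\xi_0=\delta$, where it equals $H\delta^{-(2-2\gamma-2\alpha)}$. The constraint $H\le C_1\delta^{2-2\gamma-2\alpha}$ therefore makes this at most $C_1$, and choosing $C_1$ small enough forces the nonlinear-plus-half-dissipation expression to be negative throughout both ranges, completing \eqref{keyineq}.

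The main obstacle I anticipate is bookkeeping rather than a new idea: one must check that the single smallness condition $H\le C_1\delta^{2-2\gamma-2\alpha}$ simultaneously controls both the linear and power ranges, which hinges on the monotonicity in $\xi$ and $\xi_0$ that the sign of $\beta+2\alpha+2\gamma-2$ provides. Note that $\alpha+\gamma<1$ guarantees $2-2\gamma-2\alpha>0$, so this is a genuine largeness condition on $\delta$, consistent with propagation of regularity from large to small scales. The conclusion of the lemma is then as in the Burgers case: since $\xi_0(t)$ solving \eqref{xi0eq1} reaches $0$ in finite time and $\omega(\xi,0)$ is the stationary modulus \eqref{modcon1}, Theorem~\ref{mainsqg} follows immediately.
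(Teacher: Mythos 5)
Your proposal follows exactly the route the paper intends: for $1/2<\gamma<1$ the paper itself only remarks that, given Lemmas~\ref{msqgucon} and \ref{Omestmsqg}, the argument is ``quite similar to the Burgers case'' and leaves the details to the reader, and your verification --- inserting the factor $\xi^{2\gamma-1}$ from Lemma~\ref{Omestmsqg} into the three-range Burgers computation, noting the time-derivative-versus-dissipation comparison is $\gamma$-independent, and closing both ranges via the sign of $\beta+2\alpha+2\gamma-2$ together with $H\le C_1\delta^{2-2\alpha-2\gamma}$ --- fills in those details correctly. Deferring the endpoint $\gamma=1/2$ (where the first integral in \eqref{modconumsqg} diverges for a modulus with a jump at zero) to the separate argument of Section~\ref{sqg} is likewise exactly the paper's structure, so your treatment covers the stated range in the same way.
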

%\it Remarks. \rm 1. As in the Burgers case, all estimates are done for $\epsilon>0$ but are independent of $\epsilon.$ \\
%2. For $t$ larger than $T$ such that $\xi_0(T)=0,$ the solution $\theta(x,t)$ will obey stationary modulus of continuity $\omega(\xi) \equiv \omega(\xi,0),$
%and hence it will will have uniformly bounded $C^\beta$ norm.
%This is straightforward to check using the bounds sketched in the proof below.
For the case $1/2 < \gamma <1$ the proof, given Lemmas~\ref{msqgucon} and \ref{finlemmsqg}, is quite similar to the Burgers case.
We leave details for the interested reader. The case $\gamma =1/2$ is different (Lemma~\ref{msqgucon} does not apply since the first integral on the right hand side of
\eqref{modconumsqg} diverges for $\omega(\xi)$ not vanishing at zero).
We address this case in the next section.

\section{Eventual Regularization in the Supercritical Case: the SQG equation}\label{sqg}

The approach we employed to prove eventual regularization for the Burgers and modified SQG equation seems hopeless for the SQG equation.
%The reason is that if our control over $\theta$ is only in terms of discontinuous modulus of continuity, then we can't hope for any control over $u.$
Indeed, $u$ is a Riesz transform of $\theta$ in the SQG case. If we only have $L^\infty$ bounds for $\theta,$ then a-priori we only know that $u$
is in BMO, and hence can have logarithmic singularities. But then we do not have $L^\infty$ control over $u$ at any scale, which spells trouble for trying
to estimate the nonlinear term: there is no satisfactory $\Omega(\xi)$ for $\omega$ with a jump at zero.
While this is all true, there is a reserve that we can try to use: a better estimate of the dissipation term.

In this section, we will prove Lemma~\ref{finlemmsqg} and Theorem~\ref{mainsqg} for the $\gamma=1/2$ case.% Let $\theta(x,t)$ be a solution of
%\begin{equation}\label{sqg1}
%\theta_t = (u \cdot \nabla)\theta -(-\Delta)^\alpha \theta + \epsilon \Delta \theta,\,\,\,\theta(x,0)=\theta_0(x),
%\end{equation}
%where $u = \nabla^\perp (-\Delta)^{-1/2} \theta,$ $x \in \Tm^2.$
%\begin{theorem}\label{mainsqg}
%Let $\theta(x,t)$ be solution of \eqref{sqg1} with $\theta_0 \in L^\infty.$ Fix $1>\beta>1-2\alpha.$
%Then there exists a time $T=T(\alpha,\beta,\|\theta_0\|_{L^\infty}$
%such that $\|\theta(x,t)\|_{C^\beta}$ is uniformly bounded for all times $t \geq T,$ with bound independent of $\epsilon.$
%\end{theorem}
%\begin{proof}
%We will use the same modulus of continuity as for the Burgers equation, $\omega(\xi,\xi_0(t))$ given by
%\eqref{omxi0}. %The fact that $\epsilon>0$ makes sure that all solutions will be smooth. Yet all estimates will be uniform in $\epsilon;$
%in fact we will not mention $\epsilon$ from now on.
%We will prove the following result, which is a complete analog of Lemma~\ref{finlembur}.
%\begin{lemma}\label{finlemsqg}
%Assume that the initial data $\theta_0(x)$ for \eqref{modsqg1}, $\gamma=1/2,$ obeys $\omega(\xi, \delta).$
%There exist positive constants $C_{1,2}=C_{1,2}(\alpha,\beta)$ such that if $\xi_0(t)$ is a solution of
%\begin{equation}\label{xi0eq2}
%\frac{d\xi_0}{dt} = -C_2\xi_0^{1-2\alpha}, \,\,\,\xi_0(0)=\delta,
%\end{equation}
%and $H \leq C_1 \delta^{1-2\alpha},$ then the solution $\theta(x,t)$ obeys $\omega(\xi,\xi_0(t))$ for all $t$ for which $\xi_0(t) \geq 0.$
%\end{lemma}
%\end{proof}
To prove Lemma~\ref{finlemmsqg} for $\gamma=1/2$ case, we will need as before to rule out the breakthrough scenario. Consider the minimal time $t_1$ for which
there exist $x,y \in \Tm^2$ such that $\theta(x,t_1)-\theta(y,t_1)=\omega(\xi,\xi_0(t_1)),$ where $\xi=|x-y|.$
First, we need an improved estimate on the contribution of dissipation to $\partial_t(\theta(x,t)-\theta(y,t)).$
The idea of the argument is as follows.
It will be clear from the estimates in the proof of Lemma~\ref{dissimpest} below that the expression \eqref{dtermest} for $D_\alpha(\xi,t)$ comes from
diffusion "parallel" to the direction $x-y.$ This is the minimal contribution of the diffusion part given that we are in a breakthrough scenario. But this contribution
is realized when $\theta$ does not depend on direction orthogonal to $x-y$ ($\theta$ is equal to $\frac12 \omega(2\eta),$ where $\eta$ is the coordinate
in $x-y$ direction and $\omega$ is taken to be odd for negative values). Hence in this weakest dissipation scenario, the fluid velocity in the direction $x-y$ vanishes,
and nonlinear term does not pose any danger. To generate some fluid motion in direction $x-y,$ we need $\theta$ to vary in the direction orthogonal
to $x-y.$ This will produce additional dissipation, and we can try to use this extra dissipation to help control the nonlinear term. %The estimate below can be viewed
%as taking advantage of depletion of nonlinearity due to its geometric structure.
This turns out to be possible due to the structure of nonlinearity.
\begin{lemma}\label{dissimpest}
Assume that $x,$ $y,$ $\xi,$ and $t_1$ are as described above. Then
\begin{equation}\label{dissest56}
 -(-\Delta)^\alpha \theta(x,t_1)+(-\Delta)^\alpha \theta(y,t_1) \leq D_\alpha(\xi,t_1)+D^\perp_\alpha(\xi,t_1),
\end{equation}
where
\begin{equation}\label{dissperp}
D^\perp_\alpha(\xi,t_1) \leq -C \int_{(\frac12-c)\xi}^{(\frac12+c)\xi}d\eta \int_{0}^{c\xi} \frac{2\omega(2\eta,\xi_0(t_1))-\theta(\eta,\nu,t_1)+\theta(-\eta,\nu,t_1)-
\theta(\eta,-\nu,t_1)+\theta(-\eta,-\nu,t_1)}{\left(
\left( \frac{\xi}{2}-\eta \right)^2 +\nu^2 \right)^{1+\alpha}}\,d\nu.
\end{equation}
Here $C,c>0$ are fixed constants that may depend only on $\alpha.$
\end{lemma}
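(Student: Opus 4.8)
The plan is to write the left-hand side of \eqref{dissest56} as a single symmetrized singular integral and to retain the part of it that the derivation of $D_\alpha$ discards. Choose coordinates with the midpoint of $x,y$ at the origin and the first axis along $x-y$, so that $x=(\xi/2,0)$, $y=(-\xi/2,0)$, and write $\theta(\eta,\nu)$ for the value at the point $(\eta,\nu)$. Using the pointwise formula for the fractional Laplacian, I would write
\[
-(-\Delta)^\alpha\theta(x,t_1)+(-\Delta)^\alpha\theta(y,t_1)=C_\alpha\,\mathrm{p.v.}\!\int_{\Rm^2}\left[\frac{\theta(w)-\theta(x,t_1)}{|w-x|^{2+2\alpha}}-\frac{\theta(w)-\theta(y,t_1)}{|w-y|^{2+2\alpha}}\right]dw,
\]
and then fold the integral over $\Rm^2$ into the first quadrant $\{\eta>0,\nu>0\}$ by summing the integrand over the four reflections $(\pm\eta,\pm\nu)$ of $w=(\eta,\nu)$. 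Setting $A=((\xi/2-\eta)^2+\nu^2)^{-(1+\alpha)}$ and $B=((\xi/2+\eta)^2+\nu^2)^{-(1+\alpha)}$ and using $\theta(x,t_1)-\theta(y,t_1)=\omega(\xi,\xi_0(t_1))$, the reflections collapse the kernels pairwise and the folded integrand becomes
\[
(A-B)\bigl(\theta(\eta,\nu)+\theta(\eta,-\nu)-\theta(-\eta,\nu)-\theta(-\eta,-\nu)\bigr)-2(A+B)\,\omega(\xi,\xi_0(t_1)).
\]

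The second step is the modulus replacement. For $\eta>0$ one has $A-B>0$, and the breakthrough hypothesis gives $\theta(\eta,\nu)-\theta(-\eta,\nu)\le\omega(2\eta)$ and $\theta(\eta,-\nu)-\theta(-\eta,-\nu)\le\omega(2\eta)$, so the four-point combination is at most $2\omega(2\eta,\xi_0(t_1))$. Writing it as $2\omega(2\eta)-N(\eta,\nu)$, where $N$ is exactly the nonnegative numerator appearing in \eqref{dissperp}, the folded integrand equals
\[
\bigl[2(A-B)\omega(2\eta,\xi_0(t_1))-2(A+B)\omega(\xi,\xi_0(t_1))\bigr]-(A-B)N(\eta,\nu).
\]
Integrating the bracketed term over $\{\eta>0,\nu>0\}$ reproduces, after the $\nu$-integration collapses the two-dimensional kernels to the one-dimensional weight $\eta^{-1-2\alpha}$, precisely the expression \eqref{dtermest} for $D_\alpha(\xi,t_1)$; this is the same worst-case computation carried out in Section 5 of \cite{KNV}, corresponding to the profile odd in $\eta$ and independent of $\nu$, for which $N\equiv0$. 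The remaining term $-C_\alpha\int_{\eta,\nu>0}(A-B)N$ is the genuine surplus dissipation, which I take to be $D^\perp_\alpha(\xi,t_1)$, thereby establishing \eqref{dissest56}.

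To reach the quantitative bound \eqref{dissperp}, I would discard the nonnegative integrand $(A-B)N$ outside the rectangle $R=\{(\tfrac12-c)\xi<\eta<(\tfrac12+c)\xi,\ 0<\nu<c\xi\}$; since $(A-B)N\ge0$, dropping this part only increases $-C_\alpha\int(A-B)N$. On $R$ one checks that $(\xi/2-\eta)^2+\nu^2\le 2c^2\xi^2$ while $(\xi/2+\eta)^2+\nu^2\ge(1-c)^2\xi^2$, so $A/B\ge((1-c)^2/2c^2)^{1+\alpha}\ge2$ once $c=c(\alpha)$ is small, whence $A-B\ge A/2$ throughout $R$. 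This replaces $A-B$ by a multiple of $A=((\xi/2-\eta)^2+\nu^2)^{-(1+\alpha)}$ and yields exactly the form of \eqref{dissperp}.

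The step I expect to be the main obstacle is the justification of the singular-integral manipulations: the kernel $A$ is nonintegrable near the on-axis point $(\xi/2,0)$, so the principal value and the folding must be handled with care. The saving fact is that $N$ vanishes there, since at that point $N=2\omega(\xi)-2(\theta(x,t_1)-\theta(y,t_1))=0$, and, because $\theta$ is smooth and $x$ is a touching point where $\partial_l\theta(x,t_1)=\partial_\xi\omega(\xi,t_1)$ and the transverse derivative vanishes (as in the proof of Proposition~\ref{termsests}), $N$ vanishes at least quadratically, which dominates the $r^{-2-2\alpha}$ singularity in two dimensions for $\alpha<1$. Verifying this quadratic vanishing and the convergence of the folded representation, together with matching constants in the identification of the bracketed integral with $D_\alpha$, is where the real work lies.
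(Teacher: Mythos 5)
Your argument is correct, and at its core it is the same proof as the paper's: the same choice of coordinates, the same splitting of the increment into the worst-case profile $\omega(2\eta)$ plus a nonpositive remainder $N$, the same identification of the profile part with $D_\alpha$ after the $\nu$-integration collapses the two-dimensional kernel to a one-dimensional one, and the same localization of the remainder to a rectangle around $(\xi/2,0)$ via the kernel comparison $A-B\geq \tfrac12 A$, which is exactly the paper's condition \eqref{cdef55}. The one genuine difference is the representation of the fractional Laplacian: you use the exact jump kernel $|w-x|^{-2-2\alpha}$ with a principal value, while the paper works with the semigroup kernel $\cP^{\alpha,2}_h$ and passes to the limit $h\to 0$ only at the end. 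The paper's choice is precisely what removes the obstacle you single out as the main difficulty: for $h>0$ the kernel is bounded, so the folding over reflections and the parallel/perpendicular splitting are unconditionally justified, with the properties \eqref{Phi} entering only in the final limit; the price is that $D_\alpha$ in \eqref{dtermest} is obtained with the constant $c_\alpha=C_1(\alpha)$ rather than the exact one. In your version the constants come out exact, and since $D_\alpha\leq 0$ and the exact jump-kernel constant dominates $C_1(\alpha)$ (by the lower bound in \eqref{Phi} together with the scaling \eqref{Phi1}), the inequality \eqref{dissest56} as stated still follows, so the constant-matching you flag is harmless. Your convergence worry is also lighter than you fear: the lemma is only applied with $\gamma=1/2$ and $\alpha+\gamma<1$, hence $\alpha<1/2$, and then mere Lipschitz vanishing of the numerator at $(\xi/2,0)$ --- which follows from $N\geq 0$, $N(\xi/2,0)=0$, smoothness of $\theta$, and the Lipschitz bound on $\omega(\cdot,\xi_0)$ away from zero --- already gives absolute convergence, since $\int_0 r^{-2-2\alpha}\cdot r\cdot r\,dr<\infty$ for $\alpha<1/2$. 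No quadratic vanishing is needed, which is fortunate, because $\omega(\cdot,\xi_0)$ has corners (at $\xi_0$ and $\delta$), so second-order vanishing of $N$ can fail at exceptional values of $\xi$; in particular the first-order touching conditions from Proposition~\ref{termsests} need not be invoked here at all. This is exactly the content of the one-line remark on absolute convergence with which the paper closes its proof.
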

\begin{proof}
Recall that we denote by ${\cP}^{\alpha,d}_h(x)$ the kernel corresponding to the operator $\exp(-(-\Delta)^\alpha h)$ in dimension $d.$
%We will denote by $P^\alpha_h(x)$ the corresponding one-dimensional kernel.
Then in our case
\[ -(-\Delta)^\alpha \theta(x,t_1)+(-\Delta)^\alpha \theta(y,t_1) = \lim_{h \rightarrow 0} \frac1h \left( \cP^{\alpha,2}_h * \theta(x,t_1) - \theta(x,t_1)
 -\cP^{\alpha,2}_h * \theta(y,t_1) +\theta(y,t_1) \right). \]
Note that by assumption, $\theta(y,t_1)-\theta(x,t_1) = \omega(\xi,\xi_0(t_1)).$ Let us estimate the difference of the two remaining terms, omitting time dependence
for the sake of brevity. For the rest of the argument, $\omega(\xi) \equiv \omega(\xi,\xi_0(t_1)).$ Without loss of generality, we can align one of the coordinate axes
with $x-y,$ and set $x=(\xi/2,0)$ and $y=(-\xi/2,0).$ Then
\begin{eqnarray*} \cP^{\alpha,2}_h * \theta(x) -\cP^{\alpha,2}_h * \theta(y) =
\iint_{\Rm^2}
[\cP_h^{\alpha,2}(\tfrac{\xi}{2}-\eta,\nu)-\cP_h^{\alpha,2}(-\tfrac{\xi}{2}-\eta,\nu)]\theta(\eta,\nu)\,
d\eta d\nu
\\
=\int_{\Rm}d\nu\int_0^\infty
[\cP_h^{\alpha,2}(\tfrac{\xi}{2}-\eta,\nu)-\cP_h^{\alpha,2}(-\tfrac{\xi}{2}-\eta,\nu)]
[\theta(\eta,\nu)-\theta(-\eta,\nu)]\,d\eta
\\
=
\int_{\Rm}d\nu\int_0^\infty
[\cP_h^{\alpha,2}(\tfrac{\xi}{2}-\eta,\nu)-\cP_h^{\alpha,2}(-\tfrac{\xi}{2}-\eta,\nu)]
\omega(2\eta)\,d\eta + \\
\int_{\Rm}d\nu\int_0^\infty
[\cP_h^{\alpha,2}(\tfrac{\xi}{2}-\eta,\nu)-\cP_h^{\alpha,2}(-\tfrac{\xi}{2}-\eta,\nu)]
[\theta(\eta,\nu)-\theta(-\eta,\nu)-\omega(2\eta)]\,d\eta \\
\equiv D_{\alpha,h}^{||} (\xi) + D^\perp_{\alpha,h} (\xi).
\end{eqnarray*}
The term corresponding to $D_{\alpha,h}^{||}(\xi)$ can be further rewritten as
\begin{eqnarray*}
D_{\alpha,h}^{||}(\xi) = \int_{\Rm}d\nu\int_0^\infty
[\cP_h^{\alpha,2}(\tfrac{\xi}{2}-\eta,\nu)-\cP_h^{\alpha,2}(-\tfrac{\xi}{2}-\eta,\nu)]
\omega(2\eta)\,d\eta \\
= \int_0^\infty[\cP_h^{\alpha,1}(\tfrac{\xi}{2}-\eta)-\cP_h^{\alpha,1}(-\tfrac{\xi}{2}-\eta)]
\omega(2\eta)\,d\eta \\
=
\int_0^\xi \cP_h^{\alpha,1}(\tfrac{\xi}{2}-\eta)\omega(2\eta)\,d\eta+
\int_0^\infty \cP_h^{\alpha,1}(\tfrac{\xi}{2}+\eta)[\omega(2\eta+2\xi)-\omega(2\eta)]\,d\eta \\
= \int_0^{\frac{\xi}{2}}\cP_h^{\alpha,1}(\eta)[\omega(\xi+2\eta)+\omega(\xi-2\eta)]\,d\eta+
\int_{\frac{\xi}{2}}^\infty
\cP_h^{\alpha,1}(\eta)[\omega(2\eta+\xi)-\omega(2\eta-\xi)]\,d\eta\,.
\end{eqnarray*}
The properties \eqref{Phi} of the kernel $P_h^{\alpha,1}(\eta)$ are used in this process.
Recalling that $\int_0^\infty \cP_h^{\alpha,1}(\eta)\,d\eta = \frac12,$ subtracting
$\omega(\xi)$ and passing to the limit $h \rightarrow 0,$ we recover expression
\eqref{dtermest} for $D_\alpha(\xi)$ (thus $D_\alpha(\xi) = \lim_{h \rightarrow 0} \frac1h (D_{\alpha,h}^{||}(\xi)-\omega(\xi))$).

Consider now the term
\begin{equation}\label{diffperp11} D^\perp_{\alpha,h} (\xi)= \int_{\Rm}d\nu\int_0^\infty
[\cP_h^{\alpha,2}(\tfrac{\xi}{2}-\eta,\nu)-\cP_h^{\alpha,2}(-\tfrac{\xi}{2}-\eta,\nu)]
[\theta(\eta,\nu)-\theta(-\eta,\nu)-\omega(2\eta)]\,d\eta. \end{equation}
Observe that due to \eqref{Phi} the first factor under the integral is always strictly positive, while the second factor is
by our assumptions always less than or equal to zero. We will need just a small part of the integral \eqref{diffperp11} -
the part near the dangerous point $\eta=\xi/2.$ Recall that due to \eqref{Phi}, we have
\[ \frac{C_1(\alpha)h}{(\eta^2+\nu^2+h^2)^{1+\alpha}} \leq \cP^{\alpha,2}_h(\eta,\nu) \leq \frac{C_2(\alpha)h}{(\eta^2+\nu^2+h^2)^{1+\alpha}}. \]
%Let $z=(\eta,\nu).$
Choose a constant $c(\alpha),$ $1/4>c(\alpha)>0,$ which is sufficiently small so that
\begin{equation}\label{cdef55} \cP_h^{\alpha,2}(\tfrac{\xi}{2}-\eta,\nu)-\cP_h^{\alpha,2}(-\tfrac{\xi}{2}-\eta,\nu) \geq \frac12 \cP_h^{\alpha,2}(\tfrac{\xi}{2}-\eta,\nu)
\end{equation}
if $|\nu| \leq c\xi$ and $|\eta-\frac{\xi}{2}| \leq c\xi.$
The contribution of the "perpendicular" diffusion does not exceed
\begin{eqnarray}\nonumber
D^\perp_\alpha(\xi) = \lim_{h \rightarrow 0} \frac1h D^\perp_{\alpha,h} (\xi) = \\
\lim_{h \rightarrow 0} \frac1h\int_{\Rm}\,d\nu\int_0^\infty  \nonumber
[\cP_h^{\alpha,2}(\tfrac{\xi}{2}-\eta,\nu)-\cP_h^{\alpha,2}(-\tfrac{\xi}{2}-\eta,\nu)]
[\theta(\eta,\nu)-\theta(-\eta,\nu)-\omega(2\eta)]\,d\eta \\ \nonumber
\leq \lim_{h \rightarrow 0} \frac{1}{2h} \int_{-c\xi}^{c\xi}d\nu \int_{(\frac12-c)\xi}^{(\frac12+c)\xi}
\cP_h^{\alpha,2}(\tfrac{\xi}{2}-\eta,\nu)[\theta(\eta,\nu)-\theta(-\eta,\nu)-\omega(2\eta)]\,d\eta \\
\leq C(\alpha)\int_{-c\xi}^{c\xi}d\nu \int_{(\frac12-c)\xi}^{(\frac12+c)\xi} \nonumber
\frac{\theta(\eta,\nu)-\theta(-\eta,\nu)-\omega(2\eta)}{\left(
\left( \frac{\xi}{2}-\eta \right)^2 +\nu^2 \right)^{1+\alpha}}\,d\eta \leq \\
-C(\alpha)\int_{0}^{c\xi}d\nu \int_{(\frac12-c)\xi}^{(\frac12+c)\xi}
\frac{2\omega(2\eta)-\theta(\eta,\nu)+\theta(-\eta,\nu)-\theta(\eta,-\nu)+\theta(-\eta,-\nu)}{\left(
\left( \frac{\xi}{2}-\eta \right)^2 +\nu^2 \right)^{1+\alpha}}\,d\eta. \label{diffperp09}
\end{eqnarray}
Notice that despite non-integrable singularity in the denominator, all integrals in the calculations converge absolutely
due to cancelation in the numerator ($\theta(x)$ is smooth and $\theta(\xi/2,0)-\theta(-\xi/2,0)-\omega(\xi)=0$).
\end{proof}
Now consider the nonlinear term. Let us denote $l$ the direction along $\eta$ axis (parallel to $x-y$). We will continue
for now to suppress dependence on time in notation to avoid too cumbersome expressions. In particular, we will use
notation $\omega(\xi)$ for $\omega(\xi,\xi_0(t_1)).$
Recall from the proof of Proposition~\ref{termsests} that
\[%\begin{equation}\label{www22}
 \left|(u \cdot \nabla)\theta(x) - (u \cdot \nabla)\theta(y) \right|= \left|(u(x)-u(y))\cdot l\right| \omega'(\xi).
\]%\end{equation}
Let us define
\begin{equation}\label{Om14}
\Omega(\xi) = A \left( -\xi^{2\alpha}D^\perp_\alpha(\xi)+\xi \int_\xi^\infty \frac{\omega(r)}{r^2}\,dr + \omega(\xi) \right),
\end{equation}
where $A$ is a sufficiently large constant that may depend only on $\alpha.$
The following is the key estimate of the nonlinear term.
\begin{lemma}\label{nonlinsqg234}
Suppose that $u = \nabla^\perp (-\Delta)^{-1/2}\theta,$ $\omega$ is a modulus of continuity, and
$x,$ $y$ and $l$ are as above (we are in a breakthrough scenario). Then we have
\begin{equation}\label{ub341}  \left|(u(x)-u(y))\cdot l\right| \leq \Omega(\xi). \end{equation}
\end{lemma}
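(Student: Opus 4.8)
The plan is to represent $(u(x)-u(y))\cdot l$ as an explicit singular integral, to use the odd symmetries of its kernel to rewrite it with a four-point combination of values of $\theta$, and then to split the integral into a singular piece matched against $D^\perp_\alpha$ and two regular pieces matched against the remaining terms of $\Omega(\xi)$ in \eqref{Om14}. Since $\gamma=1/2$, the Biot--Savart kernel is $c_0\,\tfrac{(x-z)^\perp}{|x-z|^3}$ for a fixed constant $c_0$, and with $x=(\tfrac\xi2,0)$, $y=(-\tfrac\xi2,0)$ and $l=e_1$ its $e_1$-component picks out the second coordinate $\nu$ of the integration variable $z=(\eta,\nu)$. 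Thus
\[ (u(x)-u(y))\cdot l = c_0\int_{\Rm^2}\nu\,K(\eta,\nu)\,\theta(\eta,\nu)\,d\eta\,d\nu,\qquad K(\eta,\nu)=\tfrac{1}{((\frac\xi2-\eta)^2+\nu^2)^{3/2}}-\tfrac{1}{((\frac\xi2+\eta)^2+\nu^2)^{3/2}}. \]
The weight $\nu K$ is positive for $\eta,\nu>0$ and is odd in each of $\eta$ and $\nu$ separately, so folding into the first quadrant exactly as in the proof of Lemma~\ref{dissimpest} gives
\[ (u(x)-u(y))\cdot l = c_0\int_0^\infty\!\!\int_0^\infty \nu K(\eta,\nu)\big[\theta(\eta,\nu)-\theta(-\eta,\nu)-\theta(\eta,-\nu)+\theta(-\eta,-\nu)\big]\,d\eta\,d\nu. \]

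The heart of the argument is a pointwise bound of this integrand by the integrand of $D^\perp_\alpha$ from \eqref{dissperp}. Write $p=\omega(2\eta)-\big(\theta(\eta,\nu)-\theta(-\eta,\nu)\big)$ and $q=\omega(2\eta)-\big(\theta(\eta,-\nu)-\theta(-\eta,-\nu)\big)$; since $\theta$ obeys $\omega$ and the relevant point pairs lie at distance $2\eta$, both $p$ and $q$ are nonnegative. The bracket above equals $q-p$, whereas the numerator in \eqref{dissperp} equals $p+q$, so $|q-p|\le p+q$ yields
\[ \big|\theta(\eta,\nu)-\theta(-\eta,\nu)-\theta(\eta,-\nu)+\theta(-\eta,-\nu)\big|\le 2\omega(2\eta)-\theta(\eta,\nu)+\theta(-\eta,\nu)-\theta(\eta,-\nu)+\theta(-\eta,-\nu). \]
This is where the structure of the nonlinearity enters: the combination (antisymmetric in $\nu$) that drives the velocity is dominated by the combination (symmetric in $\nu$) that the perpendicular dissipation controls.

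I would then split the integral into three regions. On the near-diagonal rectangle $R=\{\,|\eta-\tfrac\xi2|\le c\xi,\ 0\le\nu\le c\xi\,\}$ --- precisely the domain of $D^\perp_\alpha$ --- I set $\rho^2=(\tfrac\xi2-\eta)^2+\nu^2\lesssim\xi^2$ and bound the kernel by $\nu K(\eta,\nu)\le \nu\rho^{-3}\le\rho^{-2}\le\xi^{2\alpha}\rho^{-2-2\alpha}$, so that, combined with the pointwise estimate above, the contribution of $R$ is at most $C\,\xi^{2\alpha}\big(-D^\perp_\alpha(\xi)\big)$, the first term of \eqref{Om14}. On the complement I instead subtract the midpoint value $\theta(\bar x)$ (legitimate because the $\nu$-integral of $\nu K$ vanishes by oddness) and use $|\theta(z)-\theta(\bar x)|\le\omega(|z|)$: for $z$ outside the two rectangles with $|z|\lesssim\xi$ the kernel is bounded by $C\xi^{-2}$ on a region of area $O(\xi^2)$, giving $O(\omega(\xi))$; for $|z|\gtrsim\xi$ the mean value theorem gives $\nu K\lesssim\xi|z|^{-3}$, which integrated in polar coordinates produces $C\,\xi\int_\xi^\infty\omega(r)r^{-2}\,dr$. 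These are the remaining two terms of \eqref{Om14}, so choosing the constant $A$ large enough to absorb all constants yields \eqref{ub341}.

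The main obstacle is the near-diagonal region. One must simultaneously establish the pointwise numerator comparison $|q-p|\le p+q$ (which forces the use of the hypothesis that $\theta$ obeys $\omega$ to guarantee $p,q\ge0$) and the kernel comparison $\nu K\le\xi^{2\alpha}\rho^{-2-2\alpha}$ on $R$ (which crucially exploits $\nu\le\rho$ and $\rho\lesssim\xi$ to trade the $\rho^{-3}$ velocity singularity for the weaker $\rho^{-2-2\alpha}$ dissipation singularity), and then match both against the exact kernel and domain defining $D^\perp_\alpha$ in \eqref{dissperp}. This is exactly what lets the extra perpendicular dissipation pay for the inner part of the velocity integral, which in the usual SQG bound is the divergent term $\int_0^\xi\omega(\eta)\eta^{-1}\,d\eta$. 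As at the end of Lemma~\ref{dissimpest}, absolute convergence of all integrals at $(\eta,\nu)=(\tfrac\xi2,0)$ follows from the cancellation in the four-point numerator together with the smoothness of $\theta$.
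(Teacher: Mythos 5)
Your proposal is correct and follows essentially the same route as the paper: the same three-region splitting (far field via subtraction of the midpoint value and the tail integral $\xi\int_\xi^\infty \omega(r)r^{-2}\,dr$, intermediate annular region giving $O(\omega(\xi))$, and the near-singular rectangles), with the same two key pointwise comparisons --- the kernel bound $\nu\rho^{-3}\le C\xi^{2\alpha}\rho^{-2-2\alpha}$ and the four-point numerator bound $|q-p|\le p+q$ using that $\theta$ obeys $\omega$ at distance $2\eta$ --- matching the inner contribution against $-C\xi^{2\alpha}D^\perp_\alpha(\xi)$. The only cosmetic deviations (folding the whole integral into the first quadrant at the outset rather than only symmetrizing over $Q_x\cup Q_y$, and subtracting $\theta(\tilde x)$ instead of $\theta(x)$, $\theta(y)$ in the intermediate region) do not change the substance of the argument.
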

The key observation here is that $(u(x)-u(y))\cdot l$ cannot be controlled well by $\omega$ if $\omega$ is discontinuous. Of course, since we are working
with regularization ($\epsilon>0$), $u$ is never infinite - but there are no good uniform in $\epsilon$ bounds for $u$ in terms of $\omega.$ However, the part of $u$
not controlled by $\omega$ comes from integration over a small neighborhood close to the kernel singularity, and this part is dominated by the "perpendicular" part
of dissipation.
\begin{proof}
Set $z=(\eta,\nu).$
Let us recall that
\begin{equation}\label{udiff56}
u(x)-u(y) = C \left( P.V.\int \frac{(x-z)^\perp}{|x-z|^3}\,\theta(z)\,dz - P.V.\int \frac{(y-z)^\perp}{|y-z|^3}\,\theta(z)\,dz \right),
\end{equation}
where $C$ is a fixed constant that we will omit in the future. Here the integration is over $\Rm^2$ and $\theta(z)$ is the periodization of the function
defined on the torus.
We will split the estimate into several parts. Let $\tilde{x} = \frac{x+y}{2}.$ Then
\begin{eqnarray*}
 \left| \int_{|x-z| \geq 2\xi} \frac{(x-z)^\perp}{|x-z|^3}\,\theta(z)\,dz - \int_{|y-z| \geq 2\xi} \frac{(y-z)^\perp}{|y-z|^3}\,\theta(z)\,dz \right| = \\
  \left| \int_{|x-z| \geq 2\xi} \frac{(x-z)^\perp}{|x-z|^3}(\theta(z)-\theta(\tilde{x}))\,dz - \int_{|y-z| \geq 2\xi} \frac{(y-z)^\perp}{|y-z|^3}(\theta(z)
  -\theta(\tilde{x}))\,dz \right| \\ \leq \int_{|\tilde{x}-z| \geq 3\xi} \left|\frac{(x-z)^\perp}{|x-z|^3}-\frac{(y-z)^\perp}{|y-z|^3} \right||\theta(z)-\theta(\tilde{x})|\,dz
  \\ + \int_{3\xi/2 \leq |\tilde{x}-z| \leq 3\xi} \left( \frac{(x-z)^\perp}{|x-z|^3}+\frac{(y-z)^\perp}{|y-z|^3} \right)|\theta(z)-\theta(\tilde{x})|\,dz.
\end{eqnarray*}
Now when $|\tilde{x}-z| \geq 3\xi,$ we have
\[  \left|\frac{(x-z)^\perp}{|x-z|^3}-\frac{(y-z)^\perp}{|y-z|^3} \right| \leq \frac{C|x-y|}{|\tilde{x}-z|^3}. \]
Since $\xi=|x-y|,$ the first integral is estimated from above by $C\xi \int_{\xi}^\infty \frac{\omega(r)}{r^2}\,dr.$ The second integral
can be estimated from above by $C\omega(3\xi) \leq 3C \omega(\xi).$

Next, let us denote by $Q_x$ and $Q_y$ the squares with centers at $x$ and $y$ respectively and side length $2c\xi.$ Observe that
%\begin{eqnarray*}
\[ \int_{|x-z| \leq 2\xi,\,\,z \notin Q_x } \frac{(x-z)^\perp}{|x-z|^3}\,\theta(z)\,dz =
\int_{|x-z| \leq 2\xi,\,\,z \notin Q_x } \frac{(x-z)^\perp}{|x-z|^3}(\theta(z)-\theta(x))\,dz  \leq
\int_{c\xi}^{2\xi} \frac{\omega(r)}{r}\,dr  \leq C \omega(\xi).\]
%\end{eqnarray*}
A similar estimate holds for the corresponding integral with $y$ instead of $x.$

Finally, let us consider the contribution of the integrals over $Q_x$ and $Q_y.$ Here we will recall that what we really need to estimate is $(u(x)-u(y))\cdot l.$
Of course, taking inner product with $l$ does not spoil any of our previous estimates. Here is what remains to be estimated:
\begin{eqnarray} \nonumber
\left| \int_{Q_x} \frac{(x-z)^\perp \cdot l}{|x-z|^3}\,\theta(z)\,dz - \int_{Q_y} \frac{(y-z)^\perp \cdot l}{|y-z|^3}\,\theta(z)\,dz \right| = \\ \nonumber
\left| \int\limits_{(\frac12-c)\xi}^{(\frac12+c)\xi}d\eta \int\limits_{-c\xi}^{c\xi} \frac{\nu}{\left(\left(\frac{\xi}{2}-\eta\right)^2+\nu^2\right)^{3/2}}\theta(\eta,\nu)\,d\nu
-\int\limits_{(-\frac12-c)\xi}^{(-\frac12+c)\xi}d\eta \int\limits_{-c\xi}^{c\xi} \frac{\nu}{\left(\left(\frac{\xi}{2}+\eta\right)^2+\nu^2\right)^{3/2}}\theta(\eta,\nu)\,d\nu\right| \\ \nonumber
= \left| \,\int\limits_{(\frac12-c)\xi}^{(\frac12+c)\xi}d\eta \int\limits_{-c\xi}^{c\xi} \frac{\nu}{\left(\left(\frac{\xi}{2}-\eta\right)^2+\nu^2\right)^{3/2}}(\theta(\eta,\nu)-
\theta(-\eta,\nu))\,d\nu \right| \\ \label{ubound345} =\left| \,\int\limits_{(\frac12-c)\xi}^{(\frac12+c)\xi}d\eta
\int\limits_{0}^{c\xi} \frac{\nu}{\left(\left(\frac{\xi}{2}-\eta\right)^2+\nu^2\right)^{3/2}}(\theta(\eta,\nu)-
\theta(-\eta,\nu)-\theta(\eta,-\nu)+\theta(-\eta,-\nu))\,d\nu \right|
\end{eqnarray}
We now claim that \eqref{ubound345} does not exceed $-C\xi^{2\alpha}D^\perp_\alpha(\xi).$ Indeed, compare \eqref{ubound345} and \eqref{diffperp09}. First, within the region of
integration in these integrals, we have
\[  0<\frac{\nu}{\left(\left(\frac{\xi}{2}-\eta\right)^2+\nu^2\right)^{3/2}} \leq \frac{C\xi^{2\alpha}}{\left(
\left( \frac{\xi}{2}-\eta \right)^2 +\nu^2 \right)^{1+\alpha}}. \]
Secondly, we have
\[ |\theta(\eta,\nu)-\theta(-\eta,\nu)-\theta(\eta,-\nu)+\theta(-\eta,-\nu)| \leq 2\omega(2\eta)-\theta(\eta,\nu)+\theta(-\eta,\nu)-\theta(\eta,-\nu)+\theta(-\eta,-\nu). \]
The latter inequality can be checked directly using that $2\theta(\eta,\nu)-2\theta(-\eta,\nu) \leq 2\omega(2\eta)$ and $2\theta(-\eta,-\nu)-2\theta(\eta,-\nu) \leq 2\omega(2\eta).$
Together, these two estimates imply the needed bound. Collecting all our estimates together, we get \eqref{ub341}.
\end{proof}
Moreover, we can simplify our expression \eqref{Om14} for $\Omega(\xi).$
\begin{lemma}\label{ll23}
For every modulus of continuity from our family $\omega(\xi,\xi_0(t)),$ we have
\[ \xi \int_{\xi}^\infty \frac{\omega(r,\xi_0)}{r^2}\,dr \leq C(\beta)\omega(\xi,\xi_0), \]
with constant $C(\beta)$ depending only on $\beta.$
\end{lemma}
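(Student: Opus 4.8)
The plan is to bound $\xi\int_\xi^\infty \omega(r,\xi_0)/r^2\,dr$ by splitting the range of $\xi$ according to its position relative to the two breakpoints $\xi_0$ and $\delta$ of the piecewise definition \eqref{omxi0}, and within each range splitting the integral at $\xi_0$ and $\delta$. The guiding principle is that, since $\beta<1$, the integrand $\omega(r,\xi_0)/r^2$ decays fast enough that the tail is governed by its value near the lower endpoint: on the power-law piece $\omega(r,\xi_0)=H\delta^{-\beta}r^\beta$ one has $\int_a^\delta H\delta^{-\beta}r^{\beta-2}\,dr\le\tfrac1{1-\beta}H\delta^{-\beta}a^{\beta-1}$, so after multiplying by $\xi$ and taking $a=\xi$ this produces $\tfrac1{1-\beta}\omega(\xi,\xi_0)$; on the constant tail $[\delta,\infty)$ the contribution is $H\xi/\delta$, which is $\le\omega(\xi,\xi_0)$ because $(\xi/\delta)^{1-\beta}\le 1$ whenever $\xi\le\delta$. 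This is in fact exactly the computation carried out for the second summand of $\Omega$ in Lemma~\ref{Omestmsqg}, specialized to $\gamma=1/2$ (where $r^{3-2\gamma}=r^2$); since that estimate never used $\gamma>1/2$, it transfers verbatim.

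Concretely, first I would dispose of the trivial range $\xi>\delta$, where $\omega(\cdot,\xi_0)\equiv H$ and $\xi\int_\xi^\infty H/r^2\,dr=H=\omega(\xi,\xi_0)$. Next, in the range $\xi_0<\xi\le\delta$ I would split $\int_\xi^\infty=\int_\xi^\delta+\int_\delta^\infty$: the first part gives $\tfrac1{1-\beta}\omega(\xi,\xi_0)$ by the power-law bound above, and the second gives $H\xi/\delta\le\omega(\xi,\xi_0)$. The only case requiring some care is $0<\xi\le\xi_0$, where $\omega$ is affine. Here I would split $\int_\xi^\infty=\int_\xi^{\xi_0}+\int_{\xi_0}^\delta+\int_\delta^\infty$: on $[\xi,\xi_0]$ I bound $\omega(r,\xi_0)\le\omega(\xi_0,\xi_0)$ and use $\xi\int_\xi^{\xi_0}r^{-2}\,dr\le 1$ to get $\omega(\xi_0,\xi_0)$; the two remaining pieces are controlled by $\tfrac1{1-\beta}H\delta^{-\beta}\xi\,\xi_0^{\beta-1}\le\tfrac1{1-\beta}H\delta^{-\beta}\xi_0^\beta=\tfrac1{1-\beta}\omega(\xi_0,\xi_0)$ and by $H\xi/\delta\le\tfrac1{1-\beta}\omega(\xi_0,\xi_0)$, using $\xi\le\xi_0\le\delta$.

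The one point that makes the $\xi\le\xi_0$ case not completely automatic is that $\omega(\xi,\xi_0)$ does not tend to zero as $\xi\to 0$, so all three pieces come out proportional to $\omega(\xi_0,\xi_0)$ rather than to $\omega(\xi,\xi_0)$. The step that closes the argument is the elementary lower bound $\omega(\xi,\xi_0)\ge\omega(0+,\xi_0)=(1-\beta)H\delta^{-\beta}\xi_0^\beta=(1-\beta)\,\omega(\xi_0,\xi_0)$ valid for $0<\xi\le\xi_0$, which lets me replace $\omega(\xi_0,\xi_0)$ by $(1-\beta)^{-1}\omega(\xi,\xi_0)$ throughout. This is the main (and essentially the only) obstacle, and it is precisely where the jump of $\omega$ at zero enters.

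Finally, I would record that every bound above is homogeneous: the prefactor $H\delta^{-\beta}$ always cancels and the surviving ratios are powers of $\xi/\xi_0$, $\xi_0/\delta$ and $\xi/\delta$, each at most $1$ in its respective range. Consequently the resulting constant $C(\beta)$, a fixed combination of $1$, $(1-\beta)^{-1}$ and $(1-\beta)^{-2}$, depends only on $\beta$ and is uniform in $H$, $\delta$ and $\xi_0$, as claimed.
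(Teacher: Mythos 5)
Your proposal is correct and takes essentially the same route as the paper's proof: the same three-case split at $\xi_0$ and $\delta$, the same decomposition of the tail integral at those breakpoints, and the same key use of the jump $\omega(0+,\xi_0)=(1-\beta)H\delta^{-\beta}\xi_0^{\beta}=(1-\beta)\,\omega(\xi_0,\xi_0)$ to convert the bounds proportional to $\omega(\xi_0,\xi_0)$ into bounds proportional to $\omega(\xi,\xi_0)$ in the range $0<\xi\le\xi_0$. The only differences are in bookkeeping of constants (e.g.\ the paper retains the exact negative term in the $\xi_0\le\xi<\delta$ case to get $\frac{1}{1-\beta}$ where you get $\frac{1}{1-\beta}+1$), which is immaterial to the statement.
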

\begin{proof}
We will omit $\xi_0$ from notation $\omega(\xi,\xi_0)$ in the following estimates.
Consider $\xi \geq \delta.$ Then
\[ \xi \int_\xi^\infty \frac{\omega(r)}{r^2}\,dr = \xi \int_\xi^\infty \frac{H}{r^2}\,dr = H = \omega(\xi). \]
Next, consider $\xi_0 \leq \xi < \delta.$ Then
%\begin{eqnarray*}
\[ \xi \int_\xi^\infty \frac{\omega(r)}{r^2}\,dr = \xi \int_\xi^\delta \frac{H}{\delta^\beta r^{2-\beta}}\,dr + \xi \int_\delta^\infty \frac{H}{r^2}\,dr
=   \frac{1}{1-\beta} H \frac{\xi^\beta}{\delta^\beta} + H \frac{\xi}{\delta} \left(1 - \frac{1}{1-\beta} \right) \leq \frac{1}{1-\beta} \omega(\xi).\]
%\end{eqnarray*}
Finally, consider $0<\xi < \xi_0.$ Here
\begin{eqnarray*}
\xi \int_\xi^\infty \frac{\omega(r)}{r^2}\,dr \leq \xi \int_\xi^{\xi_0} \frac{\omega(\xi_0)}{r^2}\,dr + \xi \int_{\xi_0}^\delta \frac{H}{\delta^\beta r^{2-\beta}}\,dr +
\xi \int_\delta^\infty \frac{H}{r^2}\,dr = \\ \xi\omega(\xi_0) \left( \frac{1}{\xi} - \frac{1}{\xi_0} \right) + \frac{H}{1-\beta} \left( \frac{\xi_0^\beta \xi}{\delta^\beta \xi_0}
- \frac{\xi}{\delta} \right) + H \frac{\xi}{\delta} \leq \left(\frac{1}{1-\beta}+1\right)\omega(\xi_0) \leq \frac{2}{(1-\beta)^2} \omega(\xi).
\end{eqnarray*}
\end{proof}
 Therefore, for our family of moduli of continuity, we can replace \eqref{Om14} with
 \begin{equation}\label{Om15}
 \Omega(\xi) = A(\omega(\xi)-\xi^{2\alpha}D^\perp_\alpha(\xi)),
 \end{equation}
and \eqref{ub341} will still hold.

Now we are ready to prove Theorem~\ref{mainsqg}, case $\gamma=1/2.$
\begin{proof}[Proof of Theorem~\ref{mainsqg}, case $\gamma=1/2$]
Since the moduli of continuity we are using are the same as in the Burgers case, there is no difference in the estimate for
$\partial_t \omega(\xi,\xi_0(t)):$ we still have $\partial_t \omega(\xi,\xi_0(t)) > \frac12 D_\alpha(\xi,t)$ at $t=t_1$ provided that the constant
$C_2$ in the statement of Lemma~\ref{finlemmsqg} is taken sufficiently small. Taking into account our
improved estimates on the nonlinear and dissipative terms, Lemmas~\ref{nonlinsqg234} and \ref{dissimpest}, it remains to prove that
\begin{equation}\label{keyin331} \Omega(\xi,t_1)\partial_\xi \omega(\xi,\xi_0(t_1)) \leq -\frac{1}{2}D_\alpha(\xi,t_1)-D^\perp_\alpha(\xi,t_1) \end{equation}
for all positive $\xi,$ provided that the constant $C_1$ in the statement of Lemma~\ref{finlemmsqg} is taken sufficiently small.
Due to \eqref{Om15}, we can take $\Omega(\xi,t_1) = A(\omega(\xi,\xi_0(t_1))-\xi^{2\alpha}D^\perp_\alpha(\xi,t_1)).$
In a way completely parallel to the proof of Theorem~\ref{mainbur}, we can show that for all $0 < \xi < \delta,$
\[  A\omega(\xi,\xi_0(t_1))\partial_\xi \omega(\xi,\xi_0(t_1)) \leq -\frac12 D_\alpha(\xi,t_1) \]
if $C_1$ is sufficiently small. %An extra fixed constant $A$ does not matter in the argument and only forces smaller $C_1.$
Finally, we also have that for all $0<\xi<\delta,$
\[%\begin{eqnarray*}
-A\xi^{2\alpha} D^\perp_\alpha(\xi,t_1) \partial_\xi \omega(\xi,\xi_0(t_1)) \leq -A \beta \frac{H}{\delta^\beta} \xi^{\beta+2\alpha-1} D^\perp_\alpha(\xi,t_1) =
-A \beta \frac{H}{\delta^{1-2\alpha}} \left(\frac{\xi}{\delta} \right)^{\beta+2\alpha-1} D^\perp_\alpha(\xi,t_1).
\]%\end{eqnarray*}
The latter expression does not exceed $-D^\perp_\alpha(\xi,t_1)$ provided that $C_1$ is chosen small enough.
\end{proof}

\noindent {\bf Acknowledgement.} \rm The author acknowledges support of the NSF grant DMS-0653813.
Part of this work was done while visiting St. Petersburg Institute of Information Technology, Mechanics and Optics, and the author
thanks Prof. Igor Popov for hospitality.

\end{document}